\numberwithin{figure}{section}
\numberwithin{equation}{section}
\title[Dyck tilings of type $D$]
{Dyck tilings of type $D$}
\author[K.~Shigechi]{Keiichi~Shigechi}
\email{k1.shigechi AT gmail.com}
\date{\today}
\newcommand\tikzpic[2]{
\raisebox{#1\totalheight}{
\begin{tikzpicture}
#2
\end{tikzpicture}
}}
\newtheorem{theorem}[figure]{Theorem}
\newtheorem{example}[figure]{Example}
\newtheorem{lemma}[figure]{Lemma}
\newtheorem{defn}[figure]{Definition}
\newtheorem{prop}[figure]{Proposition}
\newtheorem{cor}[figure]{Corollary}
\newtheorem{remark}[figure]{Remark}
\begin{document}
\begin{abstract}
We introduce and study cover-inclusive and cover-exclusive Dyck tilings of type $D$. 
It is shown that the generating functions of Dyck tilings of type $D$ 
are expressed in terms of the generating function of ballot tilings of 
type $B$.
We introduce link patterns of type $D$ and plane trees for a ballot path,
and construct a map from trees to $\mathbb{Z}[q]$. 
This map gives the generating function of cover-inclusive Dyck tilings of type $D$ 
associated to the ballot path.
\end{abstract}

\maketitle

\section{Introduction}
Cover-exclusive Dyck tiling of type $A$ first appeared in the study 
of Kazhdan--Lusztig polynomials $P^{-}_{\lambda,\mu}$, where 
$\lambda$ and $\mu$ are Dyck paths,  
for the maximal parabolic subgroup \cite{Bre02}.
As shown in \cite{Deo87}, we have two natural modules $\mathcal{M}^{\pm}$ 
to study (maximal) parabolic Kazhdan--Lusztig polynomials. 
The polynomial $P^{\pm}_{\lambda,\mu}$ is associated to 
the module $\mathcal{M}^{\pm}$.  
By taking the ``inverse" of the matrix $P^{-}_{\lambda,\mu}$, 
Zinn-Justin and the author introduced the notion of cover-inclusive 
Dyck tilings to study Kazhdan--Lusztig polynomials $P^{+}_{\lambda,\mu}$ \cite{SZJ12}.  
Independently, Kenyon and Wilson also introduced cover-inclusive 
Dyck tilings in the study of double dimer models \cite{KW11,KW15} 
(conjectures in \cite{KW11} are proved in \cite{K12}).
Since then, cover-inclusive Dyck tilings appear in the connection 
to other research fields such as
Schramm--Loewner evolution \cite{KKP17,PelWu19,Pon18}, fully packed loops \cite{FN12}, and 
intersection cohomology of Grassmannian Schubert varieties \cite{Pa19}.

There are several variants of Dyck tilings and they are studied
in \cite{JVK16,KW11,K12,KMPW12,S17,S19,S20}.
The first one is symmetric Dyck tilings studied in \cite{JVK16,S20}.
The second one is ballot tiling of type $B$ studied in \cite{S17}.
We simply say a ballot tiling  instead of a ballot tiling of type $B$,
since ballot paths are already a type $B$ analogue of Dyck paths.
The third one is $k$-Dyck tilings \cite{JVK16}. A generalization of Dyck paths 
is $k$-Dyck paths and they contain rich combinatorial properties 
on its own.
In this paper, we introduce Dyck tilings of type $D$ and study 
its relation to ballot tilings.

Since the modules $\mathcal{M}^{\pm}$ are naturally identified with 
the Hermite symmetric space in case of the maximal parabolic groups, 
one can consider Dyck tilings according to the types of Hermite symmetric
space \cite{Boe88,S141}.
We consider the following three Hermite symmetric space \cite{Boe88,LejStr13,S141}:
\begin{align*}
(A_{n},A_{k}\times A_{n-k}), \quad
(B_{n},A_{n-1}),\quad
(D_{n},A_{n-1}).
\end{align*}
We call these spaces type $A$, $B$ and $D$ from left to right.

Another view for Dyck tilings is to associate Dyck tilings to Temperley--Lieb 
algebras studied in mathematical physics.
Temperley--Lieb algebras can be regarded as Hecke algebras with quotient 
relations.
Then, Kazhdan--Lusztig polynomials $P^{-}_{\lambda,\mu}$ are easily 
computed by use of the notion of link patterns.
A link pattern is a perfect matching without crosses of links. 
In this context, one can naturally consider variants of Temperley--Lieb 
algebra according to the classification of the Coxeter groups.
Note that when we say Dyck tilings of type $A$, $D$ or ballot tilings of type $B$, 
we always keep in mind this classification.
Note that this classification coincides with the classification 
by Hermite symmetric spaces.

Dyck tilings are tilings in the region surrounded by two paths $\lambda\le\mu$.
We give a weight to a tile forming a tiling and the generating function
with fixed paths is defined as the sum of weights of configurations of tiles.
We have natural three statistics $\mathrm{area}(D)$, $\mathrm{tiles}(D)$ and 
$\mathrm{art}(D)$ on a tiling $D$.
The statistic $\mathrm{tiles}(D)$ counts the number of tiles forming a tiling, and 
$\mathrm{area}(D)$ counts the number of boxes forming a tile in a tiling.
Then, the statistics $\mathrm{art}(D)$ is one-half of the sum of $\mathrm{tiles}(D)$
and $\mathrm{area}(D)$.  
Given a cover-inclusive tiling, we have Kazhdan--Lusztig polynomials by statistics $\mathrm{tiles}$ \cite{LS81,SZJ12},
and the generating function in terms of $q$-integers by $\mathrm{art}$ \cite{KW11,K12,KMPW12}. 
In general, the former cannot be written in terms of fractions of $q$-integers.
However, the latter, which is written as a product of fractions of $q$-integers, takes 
a value in $\mathbb{N}[q]$ \cite{K12,S17,S20}. 

The two classes of tilings, cover-inclusive and cover-exclusive tilings, 
are related by the inverse of the incidence matrix.
The entries of the incidence matrix encode information about cover-exclusive tilings.
Then, the incidence matrix and its inverse represent a relation of two parabolic 
Kazhdan--Lusztig polynomials $P^{\pm}_{\lambda,\mu}$ as observed in \cite{Deo87}.
In terms of the incidence matrix, this duality between cover-inclusive 
and cover-exclusive tiligns comes from the Principle of Inclusion-Exclusion 
(see for example \cite[Chapter 2]{Sta97}).
The sizes of tiles in a cover-inclusive tiling are weakly decreasing from 
bottom to top, whereas the ones in a cover-exclusive tiling are strictly
increasing from bottom to top.

Let $\lambda:=\lambda_1\ldots\lambda_{n}$ be a path of length $n$ and 
$\widetilde{\lambda}:=\lambda_1\ldots\lambda_{n-1}$ be a path of length $n-1$. 
The main result of this paper is that the generating function of Dyck tilings 
of type $D$ associated to a path $\lambda$ is equal to the generating function 
of ballot tilings associated to a path $\widetilde{\lambda}$.
We introduce the notions of a link pattern $\pi$ of type $D$ and 
of a plane tree associated to $\pi$ for a path $\lambda$. 
Combining these combinatorial objects, we give a map from trees to 
$\mathbb{N}[q]$ which yields the generating function of cover-inclusive 
Dyck tilings of type $D$.

Once we have a relation between Dyck tilings of type $D$ and ballot tilings 
of type $B$, the results for ballot tilings in \cite{S17,S20} can be naturally 
applied to the type $D$ case. 

The paper is organized as follows.
In Section \ref{sec:DTABD}, we introduce Dyck and ballot tiles to define 
cover-inclusive and cover-exclusive Dyck tilings of type $D$.
In Section \ref{sec:IM}, we study the incidence matrices. 
We show that an entry of the matrices is a generating function 
of Dyck tilings of type $D$ calculated with the weights $\mathrm{tiles}$
or $\mathrm{art}$ on Dyck or ballot tiles.
In Section \ref{sec:EnuDTD}, we study enumerations of Dyck tilings of 
type $D$. We show that the generating function of type $D$ is expressed 
in terms of the generating function of ballot tiles of type $B$.
We give a link pattern of type $D$ and a plane tree for a path $\lambda$.
We also construct a map from trees to $\mathbb{Z}[q]$ to obtain 
the generating function with the weight $\mathrm{art}$.

\paragraph{\bf Notations}
We introduce the quantum integer $[n]:=\sum_{i=0}^{n-1}q^{i}$,  
quantum factorial $[n]!:=\prod_{i=1}^{n}[i]$, $[2m]!!:=\prod_{i=1}^{m}[2i]$, 
and the $q$-analogue of the binomial coefficients
\begin{align*}
\genfrac{[}{]}{0pt}{}{n}{m}:=\frac{[n]!}{[n-m]![m]!}, 
\qquad
\genfrac{[}{]}{0pt}{}{n}{m}_{q^{2}}:=\frac{[2n]!!}{[2(n-m)]!!\cdot[2m]!!}.
\end{align*}

\section{Dyck tilings of type \texorpdfstring{$ABD$}{ABD}}
\label{sec:DTABD}
In this section, we define Dyck tilings of type $A$, $D$ and ballot 
tilings of type $B$.
For this purpose, we introduce Dyck and ballot tiles which form
a tiling.
Further, we have two classes of Dyck tilings of any types, which 
have constraint on a configuration of Dyck and ballot tiles,
namely, they are cover-inclusive and cover-exclusive tilings. 
Thus, in this paper, we consider Dyck/ballot tilings of six species.

A {\it Dyck path of size $n$} is a lattice path from the origin 
$(0,0)$ to $(2n,0)$ with up (``U" or $(1,1)$)  steps and down 
(``D" or $(1,-1)$) steps, which does not go below the horizontal 
line $y=0$.
We call a sequence of $U$ and $D$ corresponding to a Dyck path 
a Dyck word.

A {\it ballot path of size $(n,n')$} is a lattice path from the 
origin $(0,0)$ to $(2n,n')$ with up and down steps
which does not go below the horizontal line $y=0$. 
A ballot path also consists of up steps and down steps.
Note that a Dyck path of size $n$ is a ballot path of size $(n,0)$.

Let $\epsilon:=0$ or $1$.
A {\it path of type $D$ with sign $\epsilon$} is a lattice path from the origin 
$(0,0)$ to $(n,n')$ with $n'\in\mathbb{Z}$ and $n'\equiv n+2\epsilon\pmod4$. 
A path of type $D$ with sign $\epsilon$ consists of $U$ steps and $D$ steps.
The length of a bollot path is $n$.
When $n$ even and $\epsilon=0$ (resp. $1$), 
the highest path is $U\ldots U$ (resp. $U\ldots UD$) and 
the lowest path is $D\ldots D$ (resp. $D\ldots DU$).
Similarly, when $n$ odd and $\epsilon=0$ (resp. $1$),
the highest path is $U\ldots U$ (resp. $U\ldots UD$)
and the lowest path is $D\ldots DU$ (resp. $D\ldots D$).
 
The highest and lowest paths of type $D$ and a line $x=n$ determine 
a region $R$ in a plane.
Given $n$ and $\epsilon$, we put two-by-two tiles rotated in 45 degree whose 
centers are in $(n,m)$ with $m\equiv n+2(\epsilon+1)\pmod4$.
In the remaining region, we put a unit tile rotated 
in 45 degree (see Figure \ref{fig:Region}).
Once we have fixed the top and lowest paths as above, 
other paths are obtained as lattice paths in the 
region $R$. 
In case of the middle picture in Figure \ref{fig:Region},
we have eight admissible paths.
They are $UUUD$, $UUDU$, $UDUU$, $DUUU$, $UDDD$, $DUDD$,
$DDUD$, and $DDDU$.

Let $\mathcal{P}_{n}\in\{U,D\}^{n}$ be a path of length $n$,
and $\mathcal{P}_{n}^{\epsilon}\subset \mathcal{P}_{n}$,
$\epsilon\in\{0,1\}$, be 
a path of type $D$ with sign $\epsilon$.
Then, we have 
\begin{align}
\label{eqn:setpath}
\mathcal{P}_{n}=\bigoplus_{\epsilon\in\{0,1\}}\mathcal{P}_{n}^{\epsilon}.
\end{align} 
Note that given a path of type $D$ of length $n$, the 
sign $\epsilon$ is either $0$ or $1$ from Eqn. (\ref{eqn:setpath}).

\begin{remark}
We have a ballot path by concatenating $U^{m}$ to a path 
of type $D$ for some $m\ge0$.
\end{remark}

In a two-by-two tile, the north and south one-by-one tiles 
are called {\it anchor boxes} (see the right picture in 
Figure \ref{fig:Region}).
\begin{figure}[ht]
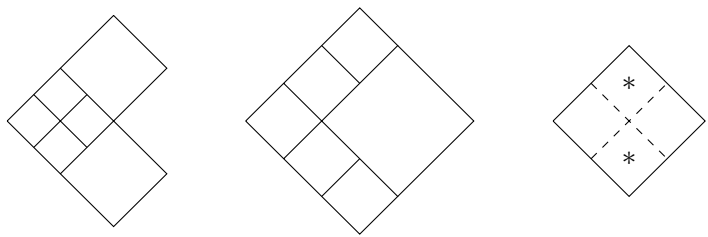

\begin{eqnarray*}
\tikzpic{-0.5}{[x=0.7cm,y=0.7cm]
\draw(0,0)--(2,2)(2,2)--(3,1)--(1,-1)(1,1)--(3,-1)--(2,-2)--(0,0);
\draw(0.5,0.5)--(1.5,-0.5)(0.5,-0.5)--(1.5,0.5);
}\qquad
\tikzpic{-0.5}{
\draw(0,0)--(1.5,1.5)--(2,1)--(0.5,-0.5)(0,0)--(1.5,-1.5)--(2,-1)--(0.5,0.5);
\draw(1,1)--(1.5,0.5)(1,-1)--(1.5,-0.5);
\draw(2,1)--(3,0)--(2,-1);
}\qquad
\tikzpic{-0.5}{
\draw(0,0)--(1,1)--(2,0)--(1,-1)--(0,0);
\draw[dashed](0.5,0.5)--(1.5,-0.5)(0.5,-0.5)--(1.5,0.5);
\draw(1,0.5)node{$\ast$}(1,-0.5)node{$\ast$};
}
\end{eqnarray*}
\caption{A tiling in $R$ for $L=4$. 
$\epsilon=0$ (left picture) and $\epsilon=1$ (middle picture).
Anchor boxes (marked by $\ast$) in a two-by-two tile (right picture).
}
\label{fig:Region}
\end{figure}

Let $\lambda$ and $\mu$ be two lattice paths defined as above.
If a path $\mu$ is above $\lambda$, we denote it by $\lambda\le\mu$. 

A {\it Dyck tile} is a ribbon (a skew shape which does not contain a two-by-two box)
such that the centers of the boxes form a Dyck  path.
Similarly, a {\it ballot tile of type $B$} is a ribbon such that 
the centers of the boxes form a ballot path.
By definition, a Dyck tile is a ballot tile, but the reverse is not true.
We will define a Dyck or a ballot tile of type $D$ by gluing 
a Dyck tile with two-by-two tile, or by gluing two ballot tiles of type $B$ and a
single box into a single tile.

Let $\lambda$ and $\mu$ be a path and $\lambda\le\mu$.
A tiling is said to be {\it cover-inclusive}
if the tiling satisfies the following conditions: 
\begin{enumerate}
\item
If we translate a Dyck/ballot tile $d_1$ without two-by-two boxes downward 
 by $(0,-2)$, 
then it is strictly below the path $\lambda$ or contained 
in another Dyck/ballot tile $d_2$.
Here, the size of $d_2$ is equal to or larger than the one of $d_1$ if cover-inclusive.
\item If we translate a Dyck/ballot tile $d_1$ with two-by-two boxes downward 
 by $(0,-4)$, 
then it is strictly below the path $\lambda$  or contained 
in another Dyck/ballot tile $d_2$.
Here, the size of $d_2$ is equal to or larger than the one of $d_1$.
\item We regard the size of a two-by-two box as one.
\end{enumerate}

Similarly, a tiling is said to be {\it cover-exclusive}
if the tiling satisfies the following conditions.
Let $d_1$ and $d_2$ be Dyck/ballot tiles and $d_1$ is above $d_2$.
\begin{enumerate}
\item
If there exists a box of $d_1$ just above, north-west of north-east of $d_2$,
then all boxes just above, north-west and north-east of a box of $d_2$
belong to $d_1$ or $d_2$.
\item
If $d_2$ contains a two-by-two box, then $d_1$ should also contain 
a two-by-two box.
\end{enumerate}

Roughly speaking, the sizes of tiles are weakly decreasing 
from the bottom to top in case of  a cover-inclusive tiling, whereas 
the sizes of tiles are strictly increasing in case of a cover-exclusive
tiling.
This is due to the Principle of Inclusion-Exclusion 
({\it e.g.} see Chapter 2 in \cite{Sta97}).

We introduce three types of tilings: 1) a Dyck tiling of type $A$,
2) a ballot tiling of type $B$, and 3) a Dyck tiling of type $D$.

\begin{defn}
Let $\lambda\le\mu$ be two Dyck paths and $R_{A}(\lambda,\mu)$ be a region 
surrounded by $\lambda$ and $\mu$. 
A Dyck tiling of type $A$ is a tiling by Dyck tiles in the region $R_{A}(\lambda,\mu)$. 
We have two types of Dyck tilings of type $A$: a cover-inclusive Dyck tiling 
and a cover-exclusive Dyck tiiing.
\end{defn}

Let $\lambda$ be a ballot path of length $(n,n')$, and $\mu\ge\lambda$ be 
a ballot path.
We denote by $R_{B}(\lambda,\mu)$ the region surrounded by $\lambda$, $\mu$
and the line $x=2n+n'$.
We put $\ast$ on the box at the line $x=2n+n'$ and call them {\it anchor boxes}.
\begin{defn}
\label{defn:bt}
A ballot tiling (of type $B$) is a tiling in the region $R_{B}(\lambda,\mu)$ by Dyck and ballot tiles.
A ballot tiling satisfies the following constraints:
\begin{enumerate}
\item
A right-most box of a ballot tile is on an anchor box.
\item
The number of ballot tiles of length $(n,n')$ is even for $n'\in2\mathbb{N}_{\ge0}+1$
and zero for $n'\in2\mathbb{N}_{\ge1}$.
\item
The right-most box of a Dyck tile of size $n\ge1$ may be on an anchor box.
\end{enumerate}
We have two types of ballot tilings: cover-inclusive ballot tilings
and cover-exclusive ballot tilings.
\end{defn}

Fix two paths $\lambda,\mu$ of type $D$ with sign $\epsilon$, where $\lambda$ is below
$\mu$.
Then, paths $\lambda, \mu$ and the line $x=n$ determine a region $R_{D}(\lambda,\mu)$ 
with one-by-one tiles and two-by-two tiles.
We put Dyck tiles and ballot tiles of type $B$ in the region $R_{D}(\lambda,\mu)$.

A {\it Dyck tiling of type $B$} is a tiling of the region $R_{D}(\lambda,\mu)$ 
by Dyck tiles and ballot tiles of type $B$ with the cover-inclusive 
or cover-exclusive property.
Here, the right-most box of a ballot tile of type $B$ is on an anchor box 
in a two-by-two box.

\begin{remark}
Note that a Dyck tiling of type $B$ is different from a ballot tiling of type $B$.
A ballot tiling of type $B$ can be regarded as a Dyck tiling of type $B$ in the region
$R_{B}(\lambda,\mu)$ with some conditions on ballot tiles as in Definition \ref{defn:bt}.
Similarly, we realize a Dyck tiling  of type $D$ as a Dyck tiling of type $B$ with some conditions
on ballot tiles.
\end{remark}

\begin{defn}
\label{defn:DTD}
A Dyck tiling of type $D$ is a Dyck tilings of type $B$ with the following 
constraints:
\begin{enumerate}
\item the right most box of a ballot tile of length $(n,n')$ with $n'\ge0$ 
is on an anchor box in a two-by-two tile.
\item The number of ballot tiles of length $(n,n')$ is even for $n'\in2\mathbb{N}_{\ge0}$ 
and zero for $n'\in2\mathbb{N}_{\ge0}+1$.
\item If a Dyck tile contains a west box of a two-by-two box but not an anchor box,
then the two-by-two box and the Dyck tile form a Dyck tile of type $D$.
\end{enumerate}
We have two types of Dyck tilings of type $D$: cover-inclusive and cover-exclusive.
\end{defn}

From conditions (1) and (2) in Definition \ref{defn:DTD}, we have two ballot 
tiles of type $B$ which have anchor boxes 
in the same two-by-two tile.
Then, there exists a unique box $b$ such that the northeast and southeast edges 
are shared by the two ballot tiles.
We glue together the two ballot tiles and the box $b$ as a single tile.
We call this glued tiles a ballot tile of type $D$.
\begin{remark}
The condition (1) in Definition \ref{defn:DTD} does not mean that all Dyck 
tiles have an anchor box in
a two-by-two tile.
The condition (1) is applied when a Dyck tile is regarded as a ballot tile.
Thus, we may have a non-trivial Dyck tile whose right-most box is not on
an anchor box.
The condition (2) in Definition \ref{defn:DTD} should be understood under the 
condition (1).
\end{remark}

Note that the condition (2) in Definition \ref{defn:bt} is different from 
the condition (2) in Definition \ref{defn:DTD}.
This difference plays a central role when we prove the correspondence between 
ballot tilings of type $B$ and Dyck tilings of type $D$ 
in Theorem \ref{thrm:DB} and Theorem \ref{thrm:DB2}.

We define statistics $\mathrm{area}$, $\mathrm{tiles}$ 
and $\mathrm{art}$ on Dyck tiles, ballot tiles, and ballot tiles of type $D$.

\paragraph{\bf Dyck tilings of type A}
All tiles forming a Dyck tiling of type A are Dyck tiles.
Recall that a Dyck tile is made from single boxes. 
For a Dyck tile $D$, we define $\mathrm{area}(D)$ as the number of boxes
which form the Dyck tile $D$ and the number of tiles as $\mathrm{tiles}(D):=1$.

\paragraph{\bf Ballot tlings}
A tile forming a ballot tiling is either a Dyck or ballot tile.
Given a Dyck tile $D$, we define $\mathrm{area}(D)$ and $\mathrm{tildes}(D)$
in a similar way as Dyck tilings of type A.
Let $B$ be a ballot tile of length $(n,n')$.
Then, the statistics $\mathrm{area}(B)$ is defined one-half plus the number 
of boxes without $\ast$ which form $B$, and the statistics $\mathrm{tiles}(B)$
is one-half.
Since the number of ballot tiles is always even by Definition \ref{defn:bt},
we regard two ballot tiles of the same shape and a unique single box which 
is next to both two ballot tiles as a single tile.

\paragraph{\bf Dyck tilings of type $D$}
We have three types of tiles: 1) a Dyck tile, 2) a Dyck tile of type $D$,
and 3) a ballot tile of type $D$.
The statistics on a Dyck tile is the same as the case of Dyck tilings of type $A$.
We define statistics $\mathrm{area}$ and $\mathrm{tiles}$ 
for a Dyck and a ballot tile of type $D$:
\begin{enumerate}
\item
Note that when a Dyck tile $D$ includes a box in two-by-two tile, $\mathrm{area}$ of 
the two-by-two tile is one. 
In other words, $\mathrm{area}$ is the number of boxes forming $D$ and their sizes are 
irrelevant. 
Recall that a Dyck tile $\mathcal{D}$ of type $D$ consists of one-by-one boxes and a single 
two-by-two box. Then, we define $\mathrm{tiles}$ of $\mathcal{D}$ is one.
\item
For a ballot tile $B$ of type $B$, we define $\mathrm{area}(B)$ is the 
number of one-by-one boxes which is not an anchor box.
Recall that a ballot tile $\mathcal{B}$ of type $D$ consists of two ballot tiles of type $B$ 
and a single box.  
Then, we define the statistics $\mathrm{tiles}(\mathcal{B}):=1$.
\end{enumerate}

Finally, for a tile $\mathcal{D}$ of any type, 
we define a statistics $\mathrm{art}$ as 
\begin{eqnarray*}
\mathrm{art}(\mathcal{D}):=(\mathrm{area}(\mathcal{D})+\mathrm{tiles}(\mathcal{D}))/2.
\end{eqnarray*}
\begin{remark}
In the definitions of $\mathrm{area}$ and $\mathrm{tiles}$ for a ballot tile in a
ballot tiling, they are not an integer. 
However, the statistics $\mathrm{art}$ on a ballot tile is always a positive integer.
\end{remark}

Examples of Dyck and ballot tiles of type $D$ are in Figure \ref{fig:Dycktile}.
\begin{figure}[ht]
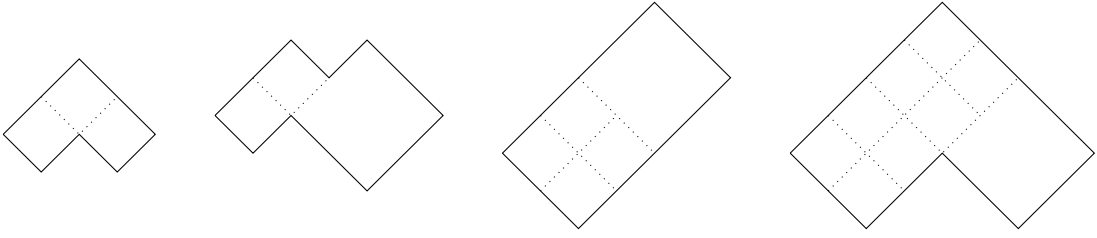

\tikzpic{-0.5}{
\draw(0,0)--(1,1)--(2,0)--(1.5,-0.5)--(1,0)--(0.5,-0.5)--(0,0);
\draw[dotted](0.5,0.5)--(1,0)--(1.5,0.5);
}\quad
\tikzpic{-0.5}{
\draw(0,0)--(1,1)--(1.5,0.5)--(2,1)--(3,0)--(2,-1)--(1,0)--(0.5,-0.5)--(0,0);
\draw[dotted](0.5,0.5)--(1,0)--(1.5,0.5);
}\quad
\tikzpic{-0.5}{
\draw(0,0)--(2,2)--(3,1)--(1,-1)--(0,0);
\draw[dotted](1,1)--(2,0)(0.5,0.5)--(1.5,-0.5)(0.5,-0.5)--(1.5,0.5);
}\quad
\tikzpic{-0.5}{
\draw(0,0)--(2,2)--(4,0)--(3,-1)--(2,0)--(1,-1)--(0,0);
\draw[dotted](1,1)--(2,0)--(3,1)(0.5,0.5)--(1.5,-0.5)(1.5,1.5)--(2.5,0.5);
\draw[dotted](0.5,-0.5)--(2.5,1.5);
}
\caption{Dyck tiles and ballot tiles of type $D$. The statistics $\mathrm{art}$ are 
$2,2,3$ and $5$ from left to right.}
\label{fig:Dycktile}
\end{figure}

Let $\lambda, \mu$ be two paths such that $\lambda$ is below $\mu$.
We denote by $\mathcal{D}_{X}(\lambda/\mu)$ the set of 
Dyck/ballot tilings of type $X$ in the region $R_{X}(\lambda,\mu)$.
Here, $X$ is a type of a Dyck/ballot tiling, so $X=A,B$ or $D$.
We define $\mathcal{D}_{X}^{I}(\lambda/\mu)$ (resp. $\mathcal{D}_{X}^{II}(\lambda/\mu)$)
be the set of cover-inclusive (resp. cover-exclusive) tilings of type $X$.

We define 
\begin{eqnarray*}
\mathcal{D}^{I}_{X}(\lambda/\ast)&:=&\bigcup_{\mu}\mathcal{D}^{I}_{X}(\lambda/\mu), \\
\mathcal{D}^{II}_{X}(\ast/\mu)&:=&\bigcup_{\lambda}\mathcal{D}^{II}_{X}(\lambda/\mu).
\end{eqnarray*}

The generating functions for tilings are defined by a sum of the weights given 
to tiles. 
\begin{defn}
Let $\lambda$ be a path. 
We define generating functions $P_{\lambda,\mu}$, $P^{X}_{\lambda}$ and 
$\widetilde{P}^{X}_{\mu}$ as 
\begin{align}
\label{eqn:dtbt}
P^{X}_{\lambda,\mu}&:=\sum_{D\in\mathcal{D}^{I}_{X}(\lambda/\mu)}q^{\mathrm{art}(D)},\\
\label{eqn:dtb}
P_{\lambda}^{X}&:=\sum_{D\in\mathcal{D}^{I}_{X}(\lambda/\ast)}q^{\mathrm{art}(D)}, \\ 
\widetilde{Q}^{X}_{\lambda,\mu}&:=\sum_{D\in\mathcal{D}^{I}_{X}(\lambda/\mu)}q^{\mathrm{tiles}(D)},\\
\label{eqn:dtt}
\widetilde{P}^{X}_{\mu}&:=\sum_{D\in\mathcal{D}^{II}_{X}(\ast/\mu)}q^{\mathrm{tiles}(D)}, 
\end{align}
where $X\in\{A,B,D\}$ is a type of Dyck tilings. 
\end{defn}

\begin{example}
Let $\lambda=DDUUDD$. Then, we have $36$ configurations 
in $\mathcal{D}^{I}_{D}(\lambda/\ast)$.
We have six configurations satisfying $\mathrm{art}(D)=5$.
These six configurations are listed in Figure \ref{fig:config}.
\begin{figure}[ht]
\scalebox{0.7}{
\tikzpic{-0.5}{
\draw(0,0)--(1,1)--(2,0)--(1,-1)--(0,0)(2,0)--(3,1)--(4,0)--(3,-1)--(2,0);
\draw(0.5,0.5)--(1.5,-0.5)(0.5,-0.5)--(1.5,0.5);
\draw[dotted](1,1)--(3,3)--(4,2)--(3,1)(2,2)--(3,1)(1.5,1.5)--(2.5,0.5)
(1.5,0.5)--(2.5,1.5);
}\qquad
\tikzpic{-0.5}{
\draw(0,0)--(0.5,0.5)(1.5,0.5)--(2,0)--(1,-1)--(0,0)(2,0)--(3,1)--(4,0)--(3,-1)--(2,0);
\draw(0.5,0.5)--(1.5,-0.5)(0.5,-0.5)--(1.5,0.5);
\draw[dotted](0.5,0.5)--(3,3)--(4,2)--(3,1)(2,2)--(3,1)(1,1)--(1.5,0.5)--(2.5,1.5);
\draw[dotted](1.5,1.5)--(2,1);
\draw(1.5,0.5)--(2,1)--(2.5,0.5);
}\qquad
\tikzpic{-0.5}{
\draw[dotted](0,0)--(3,3)--(4,2)--(3,1);
\draw(0.5,-0.5)--(2.5,1.5)--(4,0)--(3,-1)--(2,0)
(0.5,-0.5)--(1,-1)--(3,1)(1,0)--(1.5,-0.5)(1.5,0.5)--(2,0)(2,1)--(2.5,0.5);
\draw[dotted](0,0)--(0.5,-0.5)(0.5,0.5)--(1,0)(1,1)--(1.5,0.5)(1.5,1.5)--(2,1)
(2,2)--(2.5,1.5);
}}\\
\scalebox{0.7}{
\tikzpic{-0.5}{
\draw(0,0)--(1,1)--(1.5,0.5)--(2,1)--(2.5,0.5)--(3,1)--(4,0)--(3,-1)--(2,0)
(2,0)--(1,-1)--(0,0);
\draw(0.5,0.5)--(1.5,-0.5)(0.5,-0.5)--(1.5,0.5);
\draw[dotted](1,1)--(3,3)--(4,2)--(3,1)(2,2)--(3,1)
(1.5,1.5)--(2,1)--(2.5,1.5)(1.5,0.5)--(2,0)--(2.5,0.5);
}\qquad
\tikzpic{-0.5}{
\draw(0,0)--(0.5,0.5)--(1,0)--(2.5,1.5)--(4,0)--(3,-1)--(2,0)
(2,0)--(1,-1)--(0,0)(0.5,-0.5)--(1,0)--(1.5,-0.5)(2,1)--(2.5,0.5)--(3,1);
\draw[dotted](0.5,0.5)--(3,3)--(4,2)--(3,1)(1,1)--(2,0)--(2.5,0.5)
(1.5,1.5)--(2,1)(2,2)--(2.5,1.5);
}\qquad
\tikzpic{-0.5}{
\draw(0,0)--(2,2)--(4,0)--(3,-1)--(2,0)--(1,-1)--(0,0);
\draw[dotted](2,2)--(3,3)--(4,2)--(3,1)(0.5,-0.5)--(2.5,1.5)
(2,0)--(3,1)(0.5,0.5)--(1.5,-0.5)(1,1)--(2,0)(1.5,1.5)--(2.5,0.5);
}
}
\caption{Six configurations with $\mathrm{art}(D)=5$.}
\label{fig:config}
\end{figure}	
\end{example}

\begin{remark}
Dyck tilings of type $A$ are treated in \cite{SZJ12}.
Similarly, ballot tilings of type $B$ are in \cite[Section 3]{S17}.
In this paper, ballot tilings of type $B$ means that the ballot tilings
of type $BI$ in \cite{S17}.
In any types, generating functions are defined in a similar way except 
the definitions of admissible Dyck or ballot tiles and their statistics. 
\end{remark}

\section{Incidence matrix}
\label{sec:IM}
We consider the following operations on a path of type $D$:
\begin{enumerate}[($\heartsuit1$)]
\item If $U$ and $D$ are next to each other in this order, 
we make a pair of $U$ and $D$ by connecting them via a simple arc.
\item We continue the above process until the remaining sequence 
is $D\dots DU\dots U$. 
\item We make a pair of two $U$'s, which are next to each other, 
from the right end of $U\ldots U$. 
We connect the two $U$'s via a dashed arc.
\end{enumerate}

Suppose that $i$-th entry and $j$-th entry, $i>j$, in $w$ form
an arc or a dashed arc. 
Then, the size of the (possibly dashed) arc is given by
$|i-j+1|/2$.

We define two operations, $UD$-flipping and $UU$-flipping on 
a path $\lambda$.
The $UD$-flipping is an operation to change a pair of $U$ and 
$D$ into $D$ and $U$.
Similarly, $UU$-flipping is an operation to change a pair of 
two $U$'s into two $D$'s.

We introduce a relation $\leftarrow$ on paths of type $D$.
Let $\lambda_1$ and $\lambda_2$ be paths of type $D$.
We say $\lambda_{1}\leftarrow\lambda_{2}$ if $\lambda_{1}$ 
can be obtained from $\lambda_{2}$ by $UD$-flippings of the 
paired $UD$ connected by a simple arc associated with $\lambda_{2}$, 
and $UU$-flippings of the paired $UU$ connected by a dashed arc.

Suppose that $\lambda_{1}\leftarrow\lambda_{2}$ by a single $UD$-flipping.
Let $m$ be the size of the $UD$-flipped simple arc.
Then, we define the weight as 
$\mathrm{wt}^{I}(\lambda_{1}\leftarrow\lambda_{2})=-q^{m}$, 
and $\mathrm{wt}^{II}(\lambda_{1}\leftarrow\lambda_{2})=-q$.

Suppose that $\lambda_{1}\leftarrow\lambda_{2}$ by a single $UU$-flipping.
Let $m$ be the size of the $UU$-flipped dashed arc and the position  of the right 
$U$ in the $UU$ pair is $r$ from the right end.
Then, we define the weight as 
$\mathrm{wt}(\lambda_{1}\leftarrow\lambda_{2})=-q^{m+r-1}$ and 
$\mathrm{wt}^{I}(\lambda_{1}\leftarrow\lambda_{2})=-q$.

When $\lambda_1\leftarrow\lambda_2$, the weight 
$\mathrm{wt}^{\alpha}(\lambda_1\leftarrow\lambda_2)$, $\alpha\in\{I,II\}$ is the products of weights of 
admissible flippings.

\begin{defn}
The incidence matrix $M:=M_{\lambda,\mu}$ and $N:=N_{\lambda,\mu}$ are defined as 
\begin{eqnarray*}
M_{\lambda,\mu}&:=\mathrm{wt}^{I}(\lambda\leftarrow\mu)\cdot\delta_{\{\lambda\leftarrow\mu\}}, \\
N_{\lambda,\mu}&:=\mathrm{wt}^{II}(\lambda\leftarrow\mu)\cdot\delta_{\{\lambda\leftarrow\mu\}},
\end{eqnarray*}
where $\delta_{S}=1$ in case of $S$ true and $\delta_{S}=0$ otherwise.
\end{defn}

\begin{example}
\label{ex:IM}
The incidence matrix $M$ and $N$ for a path of type $D$ of length $4$
with sign $\epsilon=0$ are as follows. 
The order of bases is $UUUU, UUDD, UDUD, UDDU, DUUD, DUDU, DDUU$ and 
$DDDD$.
\begin{align*}
M&=\begin{pmatrix}
1 & 0 & 0 & 0 & 0 & 0 & 0 & 0\\ 
-q & 1  & 0 & 0 & 0 & 0 & 0 & 0 \\
0 & -q & 1 & 0 & 0 & 0 & 0 & 0 \\
0 & 0 & -q & 1 & 0 & 0 & 0 & 0 \\
0 & 0 & -q & 0 & 1 & 0 & 0 & 0 \\
0 & -q^{2} & q^{2} & -q & -q & 1 & 0 & 0 \\
-q^{3} & q^{3} & 0 & 0 & 0 & -q & 1 & 0 \\
q^{4} & 0 & 0 & 0 & 0 & 0 & -q &  1 
\end{pmatrix}, \\
M^{-1}&=\begin{pmatrix}
1 & 0 & 0 & 0 & 0 & 0 & 0 & 0\\ 
q & 1  & 0 & 0 & 0 & 0 & 0 & 0 \\
q^{2} & q & 1 & 0 & 0 & 0 & 0 & 0 \\
q^{3} & q^{2} & q & 1 & 0 & 0 & 0 & 0 \\
q^{3} & q^{2} & q & 0 & 1 & 0 & 0 & 0 \\
q^{3}+q^{4} & q^{3}+q^{2} & q^{2} & q & q & 1 & 0 & 0 \\
q^{3}+q^{5} & q^{4} & q^{3} & q^{2} & q^{2} & q & 1 & 0 \\
q^{6} & q^{5} & q^{4} & q^{3} & q^{3} & q^{2} & q &  1 
\end{pmatrix}, \\
N&=\begin{pmatrix}
1 & 0 & 0 & 0 & 0 & 0 & 0 & 0\\ 
-q & 1  & 0 & 0 & 0 & 0 & 0 & 0 \\
0 & -q & 1 & 0 & 0 & 0 & 0 & 0 \\
0 & 0 & -q & 1 & 0 & 0 & 0 & 0 \\
0 & 0 & -q & 0 & 1 & 0 & 0 & 0 \\
0 & -q & q^{2} & -q & -q & 1 & 0 & 0 \\
-q & q^{2} & 0 & 0 & 0 & -q & 1 & 0 \\
q^{2} & 0 & 0 & 0 & 0 & 0 & -q &  1 
\end{pmatrix}, \\
N^{-1}&=\begin{pmatrix}
1 & 0 & 0 & 0 & 0 & 0 & 0 & 0\\ 
q & 1  & 0 & 0 & 0 & 0 & 0 & 0 \\
q^{2} & q & 1 & 0 & 0 & 0 & 0 & 0 \\
q^{3} & q^{2} & q & 1 & 0 & 0 & 0 & 0 \\
q^{3} & q^{2} & q & 0 & 1 & 0 & 0 & 0 \\
q^{2}+q^{4} & q+q^{3} & q^{2} & q & q & 1 & 0 & 0 \\
q+q^{5} & q^{4} & q^{3} & q^{2} & q^{2} & q & 1 & 0 \\
q^{6} & q^{5} & q^{4} & q^{3} & q^{3} & q^{2} & q &  1 
\end{pmatrix}.
\end{align*}
\end{example}

Let $d_1$ and $d_2$ be Dyck/ballot tiles in a cover-exclusive Dyck tiling of type $D$, 
and $d_1$ is left to $d_2$.
Then, the distance between the center of the right-most box in $d_1$ and 
the one of the left-most box in $d_2$ is larger than one.
Given two paths $\lambda\le\mu$, there exists at most one configuration of 
Dyck/ballot tiles in the region $R_{X}(\lambda,\mu)$ for a cover-exclusive 
tiling.

The following proposition is obvious from the definition of 
$\mathrm{wt}^{II}(\lambda,\mu)$. 

\begin{prop}
The matrix $N$ is expressed in terms of cover-exclusive Dyck tilings 
of type $D$:
\begin{align*}
N_{\lambda,\mu}=\tilde{P}_{\lambda,\mu}^{D}.
\end{align*}
\end{prop}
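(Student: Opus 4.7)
The plan is to verify this identity directly by matching flipping sequences $\mu \to \lambda$ with cover-exclusive Dyck tilings of type $D$ in $R_D(\lambda,\mu)$, tile by tile.

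First I would use the observation in the paragraph just preceding the proposition: for any $\lambda \le \mu$, there is at most one cover-exclusive configuration in $R_D(\lambda,\mu)$. Hence both sides of the claimed identity are supported on a single configuration when non-zero, and it suffices to identify that configuration with the canonical arc pairing of $\mu$ and then match the weights.

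Next, I would establish the geometric dictionary. Each simple arc of size $m$ produced by the pairing $(\heartsuit 1)$--$(\heartsuit 3)$ corresponds to a local ``bump'' in $\mu$; the $UD$-flipping of that bump exposes a ribbon in $R_D(\lambda,\mu)$ that is a Dyck tile, or a Dyck tile of type $D$ in case it swallows a $2\times 2$ block of the type-$D$ cell structure. Each dashed arc in the right-end block of $U$'s, when $UU$-flipped, produces two adjacent ballot tiles of type $B$ sharing a unique middle box; by the gluing prescription stated immediately after Definition~\ref{defn:DTD}, these combine into a single ballot tile of type $D$. Since the canonical pairing nests outer arcs strictly above inner ones, the resulting collection of tiles automatically satisfies the cover-exclusive requirement that sizes are strictly increasing from bottom to top, and the parity constraint of Definition~\ref{defn:DTD}(2) holds because each dashed arc contributes exactly two ballot tiles of type $B$.

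The weight calculation is then immediate: $\mathrm{wt}^{II}$ assigns the factor $-q$ to every flipping irrespective of its arc size or type, so the product over all flippings is $(-q)^{k}$, where $k$ is the number of tiles in the corresponding cover-exclusive tiling. This coincides with $\widetilde{P}^{D}_{\lambda,\mu}$ once one absorbs the inclusion-exclusion sign into the definition (each tile contributing weight $q^{\mathrm{tiles}}$ with the alternating sign carried by $N$), so the identity follows.

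The main obstacle I anticipate is the bookkeeping for dashed arcs near the right boundary. One must check that a $UU$-flipping genuinely produces a legitimate ballot tile of type $D$ meeting both the anchor-box condition and the parity condition in Definition~\ref{defn:DTD}(1)--(2), and that nested dashed arcs interleaved with simple arcs assemble into a globally admissible type-$D$ tiling rather than overlapping or conflicting fragments. The cleanest way to make this rigorous is induction on the number of tiles, peeling off the smallest (innermost) arc at each step: the complement is again a region $R_D(\lambda,\mu')$ between two admissible type-$D$ paths, and uniqueness of cover-exclusive tilings lets one identify the reduced tiling with the shorter flipping sequence, closing the induction.
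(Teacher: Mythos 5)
Your proposal is correct and follows essentially the same route as the paper, which disposes of this proposition in one line by declaring it obvious from the definition of $\mathrm{wt}^{II}$ together with the uniqueness of the cover-exclusive configuration noted just before the statement; your write-up simply makes explicit the arc-to-tile dictionary and the weight count $(-q)^{k}$ that the paper leaves implicit. The one point worth keeping is your remark about absorbing the sign $(-1)^{\mathrm{tiles}(D)}$, since the displayed definition of the cover-exclusive generating function carries no sign while the entries of $N$ do.
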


\begin{theorem}
An entry of the inverse of incidence matrices is expressed in terms of 
Dyck tilings of type $D$ as 
\begin{eqnarray*}
(M^{-1})_{\lambda,\mu}&=P^{D}_{\lambda,\mu}, \\
(N^{-1})_{\lambda,\mu}&=\widetilde{Q}^{D}_{\lambda,\mu}
\end{eqnarray*}
\end{theorem}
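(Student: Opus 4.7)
The two claimed identities are matrix-inversion statements, $M \cdot P^{D} = I$ and $N \cdot \widetilde{Q}^{D} = I$, where the matrices $P^{D}$ and $\widetilde{Q}^{D}$ are indexed by paths of type $D$ with entries $P^{D}_{\nu,\mu}$ and $\widetilde{Q}^{D}_{\nu,\mu}$ respectively. My plan is to prove both identities in parallel by a single sign-reversing involution on pairs, following the template of the type $A$ argument in \cite{SZJ12} and the type $B$ argument in \cite{S17}, but adapted to accommodate the dashed arcs and ballot tiles of type $D$.

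First I would unpack the $(\lambda,\mu)$-entry of $M \cdot P^{D}$ as
\begin{align*}
(M \cdot P^{D})_{\lambda,\mu}
= \sum_{\nu\,:\,\lambda\leftarrow\nu}\ \mathrm{wt}^{I}(\lambda\leftarrow\nu)
\sum_{T \in \mathcal{D}^{I}_{D}(\nu/\mu)} q^{\mathrm{art}(T)},
\end{align*}
so that the sum is indexed by triples $(\nu, F, T)$ where $F$ is the (uniquely determined) collection of $UD$- and $UU$-flipping arcs witnessing $\lambda \leftarrow \nu$, and $T$ is a cover-inclusive Dyck tiling of type $D$ in $R_{D}(\nu,\mu)$. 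Each such triple contributes $(-1)^{|F|} q^{w(F)+\mathrm{art}(T)}$, where $w(F)$ is the sum of exponents from the weights in Section~\ref{sec:IM}. The goal is to construct an involution $\iota$ on the set of these triples that is sign-reversing and leaves exactly the trivial triple $(\lambda,\emptyset,\emptyset)$ fixed, so that $(M\cdot P^{D})_{\lambda,\mu}=\delta_{\lambda,\mu}$.

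The involution is the standard ``lowest-feature'' exchange: scan the region between $\lambda$ and $\mu$ from bottom to top (and, within a horizontal level, from left to right), and locate the first feature that is either the bottom of a tile of $T$ or an arc of $F$ sitting immediately above $\lambda$. If it is an arc, use the local move to replace the arc by a Dyck/ballot tile of type $D$ of matching $\mathrm{art}$, absorbed into $T$; if it is a tile, peel off its bottom row as a new arc and add it to $F$. For a $UD$-arc of size $m$, the corresponding Dyck tile (possibly glued to a two-by-two tile) has $\mathrm{art}=m$, matching $\mathrm{wt}^{I}=-q^{m}$. For a $UU$-arc of size $m$ whose right endpoint sits at offset $r$, the corresponding ballot tile of type $D$ produced by gluing two type-$B$ ballot tiles through a common anchor box has $\mathrm{art}=m+r-1$, matching $\mathrm{wt}^{I}=-q^{m+r-1}$. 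One must check that after swapping, the new flipping sequence is still a valid $\leftarrow$-chain and the new tiling is still cover-inclusive in the new region, and that $\iota\circ\iota=\mathrm{id}$. The second identity $N\cdot\widetilde{Q}^{D}=I$ is obtained from the same involution tracked under the $\mathrm{tiles}$ statistic, where every arc contributes $-q$ and every tile contributes $q$, so the cancellation is immediate.

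The main obstacle is the dashed-arc case. A $UU$-flipping has a position-dependent weight $-q^{m+r-1}$, and the corresponding ballot tile of type $D$ consists of two type-$B$ ballot tiles glued with a single connecting box whose location depends on $r$. I expect the most delicate verification to be the following two compatibilities: (i) the geometric identity $\mathrm{art}(\mathcal{B})=m+r-1$ for this glued tile, which rests on the ``one-half plus \ldots'' conventions of Section~\ref{sec:DTABD} and the fact that the connecting box contributes $\mathrm{area}=1$ and $\mathrm{tiles}=0$ after absorption; and (ii) that the ungluing step of $\iota$ produces \emph{two} ballot tiles of type $B$ whose shared anchor box lies in the unique two-by-two tile prescribed by the position of the $UU$-arc, so that conditions (1)–(3) of Definition~\ref{defn:DTD} are preserved. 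Once these two points are verified, the involution goes through uniformly, and both identities of the theorem follow at once.
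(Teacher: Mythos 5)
Your overall strategy---expanding $(M\cdot P^{D})_{\lambda,\mu}$ over triples $(\nu,F,T)$ and cancelling by a sign-reversing involution that trades flipped arcs for tiles of equal weight---is the right one, and it is essentially the argument of Theorem~6 of \cite{SZJ12} that the paper's proof invokes. Your two weight computations are also correct and can be checked against Example~\ref{ex:IM}: a $UD$-flip of an arc of size $m$ matches a tile of $\mathrm{art}=m$, and a $UU$-flip matches a ballot tile of type $D$ of $\mathrm{art}=m+r-1$ (e.g.\ $M_{DDUU,UUUU}=-q^{3}$ against the $q^{3}$ term of $(M^{-1})_{DDUU,UUUU}$).

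The genuine problem is the rule defining your involution. You scan from the bottom and toggle ``the first feature \ldots{} sitting immediately above $\lambda$,'' but the exchange between $F$ and $T$ can only happen at the interface path $\nu$, not at the bottom path $\lambda$. Concretely, take $\lambda=DDUU$, $\nu=UUDD$, $F=\{(2,3),(1,4)\}$ (the two arcs of $\pi(UUDD)$, of sizes $1$ and $2$, giving $M_{DDUU,UUDD}=q^{3}$), and $T=\emptyset$. The strip of $R_{D}(DDUU,UUDD)$ adjacent to $\lambda$ is the single box realizing the arc $(2,3)$, so your rule selects $(2,3)$; but this arc cannot be converted into a tile, since the new path $\nu'$ would have to pass below that box, forcing the three-box ribbon realizing the remaining arc $(1,4)$ to lie above $\nu'$ as well, so that $F\setminus\{(2,3)\}$ no longer witnesses $\lambda\leftarrow\nu'$ (indeed $M_{DDUU,UDUD}=0$). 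The correct move toggles the feature adjacent to $\nu$: here the arc $(1,4)$, producing the partner $(\nu'=DUDU,\;F'=\{(2,3)\}\subset\pi(DUDU),\;T'=\{\text{three-box tile}\})$ with weight $-q\cdot q^{2}=-q^{3}$. Repairing the rule forces two further verifications that your sketch does not mention: the surviving arcs of $F$, reinterpreted as arcs of $\pi(\nu')$, must keep the same sizes so the weight is preserved; and the tile created by the toggle, being adjacent to $\nu'$ from above, must be absorbable into $T$ without violating cover-inclusivity (in the example, attaching the bottom single box to $\{\text{three-box tile}\}$ would violate it, which is precisely why that pairing must not occur). Your type-$D$-specific points (i) and (ii) are well taken, but they sit on top of an involution whose core rule must be fixed first.
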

\begin{proof}
One can prove Theorem by the same argument in the proof of 
Theorem 6 in \cite{SZJ12}.
The differences are that a tile may have a two-by-two box, and 
the weight given to a tile depends on the choice of statistic on
the tile.

The weight $\mathrm{wt}^{I}(\lambda\leftarrow\mu)$ 
gives the statistics $\mathrm{art}$ on a tile, 
whereas $\mathrm{wt}^{II}(\lambda\leftarrow\mu)$
gives the statistics $\mathrm{tiles}$ on a tile.
\end{proof}

\begin{remark}
Some remarks are in order.
\begin{enumerate}
\item
When the length of a path is four with sign $\epsilon=0$, 
we have three entries of $M^{-1}$ (see Example \ref{ex:IM}) which 
are not a monomial of $q$. These correspond to non-trivial Dyck tilings of 
type $D$, that is, they contain a tile which is not a single box.
\item
An entry of the matrix $M$ is a generating function of cover-exclusive 
Dyck tilings of type $D$ with the weight $\mathrm{art}$.
\item
If we define the third weight $\mathrm{wt}^{III}(\lambda\leftarrow\mu)$ 
as the statistics $\mathrm{area}$ for the skew shape $R_{D}(\lambda,\mu)$, 
we obtain a generating function of cover-inclusive Dyck tilngs of type $D$ with 
statistics $\mathrm{area}$.
For example, when $\lambda=D^{2}U^{2}$ and $\mu=U^{4}$, we have two configurations 
of cover-inclusive Dyck tilings and their weights are both $q^{5}$. 
Thus, the generating function is $2q^{5}$.
\item 
The entries of the matrix $N$ are Kazhdan--Lusztig polynomials for the Hermite 
symmetric pair $(D_{n},A_{n-1})$ in \cite{LejStr13}.
In the language of \cite{SZJ12}, these Kazhdan--Lusztig polynomials $C_{w}^{-}$ 
are associated to the module $\mathcal{M}^{-}$.
Similarly, the matrix $N^{-1}$ gives also Kazhdan--Lusztig polynomials 
$C_{w}^{+}$ associated to the module $\mathcal{M}^{+}$.
\end{enumerate}
\end{remark}

\section{Enumeration of Dyck tilings of type \texorpdfstring{$D$}{D}}
\label{sec:EnuDTD}

\subsection{Enumeration of Dyck tilings of type \texorpdfstring{$A$}{A}}
Let $\lambda$ be a Dyck path of size $n$.
Recall that the generating function of Dyck tilings of type $A$ above $\lambda$ 
is defined by $P_{\lambda}^{A}$
and a Dyck tiling consists of only Dyck tiles.

To rewrite Eqn. (\ref{eqn:dtb}) for type $A$, we introduce a {\it chord} of a Dyck path.
We make a pair of $U$ and $D$ in $\lambda$ as ($\heartsuit 1$).
The pair is called a chord of the Dyck path.
We denote by $\mathrm{Chord}(\lambda)$ the set of chords of $\lambda$.
Note that we have $|\mathrm{Chord}(\lambda)|=n$ since we have $n$ pairs
of $U$ and $D$ in $\lambda$.
The {\it length} of a chord $c$ is one plus the number of chords in-between $U$ 
and $D$ in $c$.

The generating function (\ref{eqn:dtb}) for type $A$ can be expressed in terms 
of $q$-integers in a simple form:
\begin{theorem}[\cite{KW11,K12}]
The generating function of cover-inclusive Dyck tilings of type $A$ above 
$\lambda$ is given by 
\begin{align*}
P_{\lambda}^{A}=\frac{[n]!}{\prod_{c\in\mathrm{Chord}(\lambda)}[l(c)]}.
\end{align*}
\end{theorem}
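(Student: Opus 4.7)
The plan is to proceed by induction on the size $n$ of $\lambda$. The base case $n=0$ is immediate: the empty Dyck path admits only the empty tiling, with $\mathrm{art}=0$, so $P_{\emptyset}^{A}=1$, matching the convention $[0]!=1$ and the empty product in the denominator. For the inductive step, I would pick any innermost chord $c_{0}$ of $\lambda$, i.e.\ a consecutive $UD$ pattern with $l(c_{0})=1$. Deleting this $UD$ pair yields a Dyck path $\lambda'$ of size $n-1$, and the chord structure changes predictably: $\mathrm{Chord}(\lambda')$ is naturally identified with $\mathrm{Chord}(\lambda)\setminus\{c_{0}\}$, with $l_{\lambda'}(c)=l_{\lambda}(c)-1$ for every chord $c$ strictly containing $c_{0}$ and $l_{\lambda'}(c)=l_{\lambda}(c)$ otherwise. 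A short manipulation of $q$-integers then rewrites the right-hand side of the target formula as
\begin{equation*}
\frac{[n]!}{\prod_{c\in\mathrm{Chord}(\lambda)}[l(c)]}
=[n]\cdot\prod_{c\supset c_{0}}\frac{[l(c)-1]}{[l(c)]}\cdot\frac{[n-1]!}{\prod_{c\in\mathrm{Chord}(\lambda')}[l(c)]},
\end{equation*}
so by induction the statement reduces to the combinatorial recursion
$P_{\lambda}^{A}=[n]\,\prod_{c\supset c_{0}}[l(c)-1]/[l(c)]\cdot P_{\lambda'}^{A}$.

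To establish this recursion, I would build a weight-preserving map from $\mathcal{D}_{A}^{I}(\lambda/\ast)$ to $\mathcal{D}_{A}^{I}(\lambda'/\ast)$ obtained by contracting the $c_{0}$-slot, and analyse its fibres: each fibre is parametrised by the data recording which tiles sit above or cross through the contracted slot --- a ``trajectory'' climbing through the chain of chords $c\supset c_{0}$ --- whose $q$-weight sums to precisely the required ratio. A more conceptual alternative, closer to the Kenyon--Wilson / Kim original treatments, is to construct an explicit bijection between $\mathcal{D}_{A}^{I}(\lambda/\ast)$ and the set of \emph{Hermite histories} on $\lambda$: non-negative integer labellings of the down-steps of $\lambda$ bounded by local quantities determined by the chord nesting, under which $\mathrm{art}$ becomes the sum of the labels. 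The product formula then drops out of the product of geometric-series generating functions, after matching the bounds to $l(c)$.

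The main obstacle in either plan is reconciling the \emph{local} nature of $\mathrm{art}$ (a sum over individual tiles) with the \emph{global}, nesting-dependent invariant $\prod_{c}[l(c)]$. In the inductive plan this shows up as the need to $q$-enumerate the fibre data correctly so that the factor $\prod_{c\supset c_{0}}[l(c)-1]/[l(c)]$ appears; in the Hermite-history plan, the label bounds must telescope to yield the chord-length denominators. I expect verifying weight-preservation for tilings containing large multi-box Dyck tiles, rather than merely unit-box tiles, to be where the bulk of the technical work lies.
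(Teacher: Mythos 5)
First, note that the paper does not prove this statement; it is imported verbatim from \cite{KW11,K12} (see also \cite{KMPW12}), so the only thing to assess is whether your sketch is itself a viable proof. Both of your routes have a structural gap, not merely a technical one. For the inductive route: the factor $[n]\prod_{c\supset c_{0}}[l(c)-1]/[l(c)]$ is in general not a polynomial, so no weight-preserving contraction map can have all fibres summing to ``precisely the required ratio''. Concretely, take $\lambda=UDUUDD$ and let $c_{0}$ be the peak at positions $4,5$; the unique chord containing $c_{0}$ has length $2$, so the factor is $[3]\cdot[1]/[2]=[3]/[2]$, consistent with $P^{A}_{\lambda}=[3]$ and $P^{A}_{\lambda'}=P^{A}_{UDUD}=[2]$, but $[3]/[2]\notin\mathbb{Z}[q]$. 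Each fibre of your map contributes $q^{\mathrm{art}(D')}$ times a genuine polynomial with nonnegative coefficients, so the uniform-fibre picture is impossible. Choosing a different innermost chord sometimes repairs this, but not always: for $\lambda=UUDUDDUUDD$ ($n=5$, where $P^{A}_{\lambda}=[4][5]$) every innermost chord is nested inside a chord of length $2$ or $3$, and the resulting factors $[5]/[2]$ and $[5][2]/[3]$ are both non-polynomial. So the recursion holds only as an identity of rational functions and cannot be established fibrewise as described; whatever regrouping of fibres makes it work is essentially the whole theorem.

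The Hermite-history route fails for a parallel reason: the right-hand side is not, in general, a product of $q$-integers, so it cannot arise as a product of independent geometric series attached to the $n$ down steps. For $\lambda=UUDDUUDD$ one gets $P^{A}_{\lambda}=[4]!/([2][2])=1+q+2q^{2}+q^{3}+q^{4}$, which has degree $4$ and value $6$ at $q=1$ and is readily checked not to be expressible as $\prod_{i=1}^{4}[b_{i}]$. The statistic that does work is global, not local: the chords of $\lambda$ form a forest under nesting, $l(c)$ is exactly the hook length (subtree size) of the chord $c$, and the right-hand side is the Bj\"orner--Wachs $q$-hook-length polynomial $\sum_{\sigma}q^{\mathrm{inv}(\sigma)}$ over linear extensions $\sigma$ of this forest. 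The published proofs (\cite{K12,KMPW12}) construct a bijection --- the ``Dyck tiling strip'' growth --- between linear extensions of the chord forest and cover-inclusive Dyck tilings above $\lambda$ carrying $\mathrm{inv}$ to $\mathrm{art}$, and then quote the $q$-hook-length formula. If you want to salvage your sketch, replacing ``independent labels on down steps'' by ``linear extensions of the chord poset with the inversion statistic'' is the missing idea; the per-step geometric-series heuristic only matches the answer for paths like the zigzag whose chord forest is an antichain.
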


\subsection{Enumeration of Dyck tilings of type \texorpdfstring{$B$}{B}}
\label{sec:enubt}
Let $\lambda$ be a ballot path.
Recall that a ballot tiling of type $B$ consists of Dyck tiles and 
ballot tiles.

Let $\lambda_{M,N}$ be a path $D^{M}U^{N}$.
We abbreviate the generating function of type $B$ for the path 
path $\lambda_{M,N}$ as $Q^{B}(M,N)$.

\begin{remark}
The generating function $Q^{B}(M,N)$ is studied in \cite[Section 7.1]{S17} 
as $P(M,N)$.
\end{remark}

We recall some results for $Q^{B}(M,N)$.
For positive integers $m$ and $N$, we define 
\begin{eqnarray*}
a_{(2m-1,N)}&:=&\frac{[N+2m]}{[2m]}, \\
a_{(2m,N)}&:=&\frac{[2N+2m]}{[N+2m]}.
\end{eqnarray*}
The function $Q^{B}(M,N)$ can be expressed in terms of $a_{(i,j)}$.
\begin{lemma}[{\cite[Proposition 7.2]{S17}}]
The generating function $Q^{B}(M,N)$ satisfies 
\begin{eqnarray*}
Q^{B}(M,N)=\prod_{1\le i\le N}(1+q^{i})\cdot \prod_{1\le j\le M}a_{(j,N)}.
\end{eqnarray*}
\end{lemma}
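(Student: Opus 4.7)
The plan is to prove the formula by induction on $M$, peeling off one down-step of $\lambda_{M,N} = D^M U^N$ at each step.

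First I would settle the base case $M=0$, namely $Q^B(0,N) = \prod_{i=1}^N (1+q^i)$. For $\lambda_{0,N} = U^N$ the ballot tilings above it decouple: the cover-inclusive property combined with the parity constraint of Definition \ref{defn:bt}(2) forces the admissible tiles to be determined independently at each up-step, and the $i$-th up-step from the right contributes a binary factor $(1+q^i)$ recording whether the corresponding small tile is present (weight $q^i$) or absent (weight $1$). A direct enumeration would then confirm the product.

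Next I would establish the key recurrence
\begin{align*}
Q^B(M,N) = a_{(M,N)} \cdot Q^B(M-1, N).
\end{align*}
The combinatorial idea is to factor a cover-inclusive ballot tiling above $\lambda_{M,N}$ by isolating a ``column'' of tiles anchored at the leftmost $D$-step of $\lambda_{M,N}$; removing this column yields a tiling above $\lambda_{M-1,N}$, while the column itself contributes the factor $a_{(M,N)}$. The parity of $M$ dictates the nature of this column: for odd $M=2m-1$ it is built from Dyck tiles, giving $[N+2m]/[2m]$, while for even $M=2m$ it is built from a matched pair of ballot tiles as required by Definition \ref{defn:bt}(2), giving $[2N+2m]/[N+2m]$. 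Iterating the recurrence from $M$ down to $0$ produces $Q^B(M,N) = \prod_{j=1}^M a_{(j,N)} \cdot Q^B(0,N)$, which combined with the base case yields the claim.

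The main obstacle is the verification of the recurrence, specifically the emergence of the denominators $[2m]$ and $[N+2m]$ in $a_{(M,N)}$. These typically arise from a hook-length style cancellation on the chord diagram of $\lambda_{M,N}$, and adapting the techniques of Kenyon--Wilson \cite{KW11} and Kim \cite{K12} from the type $A$ setting to ballot tilings, while respecting the parity constraint on ballot tiles, is the delicate technical point. Once the recurrence is verified, the rest is a straightforward induction.
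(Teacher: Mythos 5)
This lemma is not proved in the paper at all: it is imported verbatim from \cite[Proposition 7.2]{S17}, where it is obtained via the bijection between ballot tilings and labelled plane trees that the present paper recalls in Section \ref{sec:enubt}. So the real question is whether your plan would constitute an independent proof, and as written it does not: the entire content of the lemma is concentrated in the recurrence $Q^{B}(M,N)=a_{(M,N)}\cdot Q^{B}(M-1,N)$, which you state, motivate, and then explicitly defer as ``the delicate technical point.''

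Moreover, the specific combinatorial mechanism you propose for that recurrence cannot work as stated. You want the ``column'' of tiles anchored at the newly exposed step to contribute the factor $a_{(M,N)}$, but $a_{(M,N)}$ is not a polynomial in $q$ in general: for example $a_{(1,1)}=[3]/[2]=(1+q+q^{2})/(1+q)$ and $a_{(2,1)}=[4]/[3]$. A ratio that is not in $\mathbb{N}[q]$ cannot be the generating function of any finite set of column configurations, so no tile-by-tile removal argument produces it directly; only the telescoped products are polynomial (e.g.\ $\prod_{j=1}^{2m}a_{(j,N)}=\genfrac{[}{]}{0pt}{}{N+m}{m}_{q^{2}}$, and the remaining denominators are absorbed by $\prod_{i=1}^{N}(1+q^{i})$). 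To salvage the induction one must either recast the step as an identity between enlarged sets, say $[2m]\cdot Q^{B}(2m-1,N)=[N+2m]\cdot Q^{B}(2m-2,N)$ proved by a weight-preserving correspondence, or abandon the one-step peeling in favour of a global bijection such as the tree/hook-length argument actually used in \cite{S17}. The base case $Q^{B}(0,N)=\prod_{i=1}^{N}(1+q^{i})$ is believable (the specialization $2^{N}$ at $q=1$ matches $N$ independent binary choices of paired ballot tiles ending on anchor boxes), but it too is asserted rather than checked against the statistic $\mathrm{art}$ of the glued pairs. In sum, the proposal is a reasonable outline whose crucial step is both missing and, in its present formulation, not realizable.
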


There are several expressions for a generating function of ballot tiles 
of type $B$ by using a tree structure associated to a ballot path $\lambda$.
The expressions are obtained by a bijection between a ballot tiling and 
a natural label on the tree (for example, see Section 7 and 8 in \cite{S17}). 
Especially, the generating function $P^{B}_{\lambda}$ has the following 
factorization.
\begin{theorem}[{\cite[Theorem 7.4]{S17}}]
Suppose that $\lambda$ is expressed as a concatenation of two paths 
$\lambda_1$ and $\lambda_2$ such that $\lambda=\lambda_1\circ\lambda_2$
and a Dyck path $\lambda_1$ cannot be written as a concatenation 
of two Dyck paths.
Let $M$ be the number of steps in $\lambda_2$ and $N$ be the length of 
the Dyck path $\lambda_1$.
Then, we have 
\begin{align*}
P^{B}_{\lambda}=P^{A}_{\lambda_1}\cdot Q^{B}(M,N)\cdot P^{B}_{\lambda_2}.
\end{align*}
\end{theorem}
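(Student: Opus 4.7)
The plan is to establish a weight-preserving bijection between $\mathcal{D}^{I}_{B}(\lambda/\ast)$ and the Cartesian product of a cover-inclusive type $A$ Dyck tiling above $\lambda_1$, a ballot tiling of the ``wedge'' path $\lambda_{M,N}=D^{M}U^{N}$, and a cover-inclusive ballot tiling above $\lambda_2$. The structural input is that, since $\lambda_1$ is an irreducible Dyck path of length $N$, its graph touches the horizontal line $y=0$ only at its two endpoints; consequently the vertical line $\ell$ erected at the junction point of $\lambda_1$ and $\lambda_2$ partitions the region $R_{B}(\lambda/\ast)$ naturally, with a distinguished interaction zone above that junction.

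First, I would split the tiles of any $D\in\mathcal{D}^{I}_{B}(\lambda/\ast)$ into three disjoint classes according to their position relative to $\ell$: (a) tiles lying strictly to its left; (b) tiles lying strictly to its right; (c) tiles straddling $\ell$. A tile of class (a) cannot be a ballot tile, because a ballot tile must have its rightmost box on the anchor column, which sits at the right boundary of $R_{B}(\lambda/\ast)$ and is inaccessible from the left of $\ell$. Hence the class (a) tiles form a cover-inclusive Dyck tiling of type $A$ above $\lambda_1$, contributing the factor $P^{A}_{\lambda_1}$.

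Next, I would argue that the class (c) tiles, taken together, tile a region whose lower boundary is precisely $\lambda_{M,N}=D^{M}U^{N}$: reading from left to right, the $N$ up-steps arise because these tiles must stack flush against the descending final portion of $\lambda_1$ down to the junction, and the $M$ down-steps arise from the $M$ steps of $\lambda_2$ that they overhang. This yields a ballot tiling associated with $\lambda_{M,N}$, contributing $Q^{B}(M,N)$. The class (b) tiles then form a cover-inclusive ballot tiling above $\lambda_2$ in the complementary region, contributing $P^{B}_{\lambda_2}$. Additivity of the statistic $\mathrm{art}$ over disjoint tiles is immediate from its definition, so the weight of $D$ factors as the product of the three weights.

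The main obstacle will be verifying that the decomposition is a genuine bijection, with the inverse given by gluing back the three tilings. One must check that cover-inclusivity is preserved under gluing across $\ell$: no tile in class (a) or (b) should be forced to violate the weakly decreasing size condition when placed adjacent to a class (c) tile. One must likewise verify that the parity condition on ballot tiles in Definition \ref{defn:bt}(2) distributes correctly between the $Q^{B}(M,N)$ factor and the $P^{B}_{\lambda_2}$ factor. The irreducibility of $\lambda_1$ is essential here: it rules out the possibility of tiles other than those in class (c) bridging the two halves of the region, so only the cross-$\ell$ interactions of class (c) tiles need detailed checking. Once these compatibility conditions are verified, the factorization follows directly.
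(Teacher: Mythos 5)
First, a point of comparison: the paper does not prove this statement at all --- it is imported verbatim from \cite[Theorem~7.4]{S17}, where the argument goes through a bijection between ballot tilings and labellings of the plane tree attached to $\lambda$ (the tree/insertion machinery of Sections~7--8 of \cite{S17}), not through a dissection of the tiling region. So your proposal is necessarily a different route, and it has to be judged on its own terms.

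As written, it has a genuine gap: the trichotomy of tiles by position relative to the vertical line $\ell$ through the junction of $\lambda_1$ and $\lambda_2$ does not yield three independent tilings. Take $\lambda_1=UD$, $\lambda_2=UUDD$, so $\lambda=UDUUDD$, and the upper path $\mu=UUUDDD$. The region between them consists of exactly two unit boxes, centred at $(2,1)$ and $(3,2)$; since these centres form a single up-step they cannot be one Dyck tile, so the (unique, cover-inclusive) tiling is two single boxes of total weight $q^{2}$. The box at $(3,2)$ lies weakly to the right of $\ell$ (the line $x=2$). If you place it in class (b), it is not a box of any tiling in the standalone region above $\lambda_2=UUDD$: it sits at height $2$ directly over the junction, on the ``shoulder'' created by $\mu$ being high there, and that shoulder exists only because of the choices made in classes (a) and (c). If instead you place it in class (c), then the middle region is no longer the fixed region above $D^{M}U^{N}$ but varies with the configuration. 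Either way the three factors do not decouple along $\ell$: the shape of the right-hand region depends on the left-hand and middle tiles, and tiles that must be charged to the factor $Q^{B}(M,N)$ need not straddle $\ell$ at all. This is exactly why the middle factor is the full generating function $Q^{B}(M,N)=P^{B}_{D^{M}U^{N}}$ over a wedge of width $M+N$, encoding the interleaving of the two halves, rather than a count of straddling tiles. Irreducibility of $\lambda_1$ does not repair this; it controls the tiles over the interior of $\lambda_1$, not the elevated tiles over $\lambda_2$. To make the factorization rigorous you need the tree/linear-extension correspondence of \cite{S17} or an insertion algorithm in the spirit of \cite{KMPW12}, where the shuffle factor arises from interleaving labels rather than from cutting the picture along a vertical line.
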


\subsection{Enumeration of Dyck tilings of type \texorpdfstring{$D$}{D} with a lower 
fixed path}
In this subsection, we study the enumeration of Dyck tilings of type $D$ 
above a ballot path $\lambda$.
For simplicity, we first consider the generating functions for a path 
$\lambda_{M,N}:=D^{N}U^{M}$. 
Then, we apply the result to general cases.

We abbreviate the generating function $P^{D}_{\lambda_{M,N}}$ as $P(M,N)$.

We first show that the generating function $P(M,N)$ of type $D$ can be expressed 
in terms of $Q^{B}(M,N)$ defined in Section \ref{sec:enubt}.
\begin{prop}
\label{prop:Box}
Let $P(M,N)$ and $Q^{B}(M,N)$ be generating functions defined as above.
We have 
\begin{align*}
P(M,N)=Q^{B}(M-1,N).
\end{align*}
\end{prop}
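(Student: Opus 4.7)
The plan is to exhibit a weight-preserving bijection
\[
\Phi:\mathcal{D}_{D}^{I}(\lambda_{M,N}/\ast)\longrightarrow\mathcal{D}_{B}^{I}(\lambda'/\ast),
\]
where $\lambda'$ is the type-$B$ path whose generating function is $Q^{B}(M-1,N)$. Since $P(M,N)$ and $Q^{B}(M-1,N)$ are both weighted enumerations by $\mathrm{art}$, the proposition follows once $\Phi$ is shown to preserve $\mathrm{art}$. As preparation I would first classify the tile species that can occur in the type-$D$ region above $\lambda_{M,N}$: ordinary Dyck tiles, Dyck tiles of type $D$ (a Dyck tile carrying exactly one two-by-two block), and ballot tiles of type $D$ (the glued object formed by two type-$B$ ballot tiles sharing their anchor boxes inside one two-by-two block, together with the unique bridging box described after Definition \ref{defn:DTD}). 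The two-by-two blocks sit in positions forced by the parity/sign rule of Section \ref{sec:DTABD}, so their locations are determined by $\lambda_{M,N}$ alone.

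The map $\Phi$ is defined by a local collapse near every two-by-two block: inside a Dyck tile of type $D$, the two-by-two block is shrunk to a single box so that the tile becomes a longer ordinary Dyck ribbon in a reduced region; inside a ballot tile of type $D$, the shared bridging box together with the pair of type-$B$ ballot tiles is fused into a single type-$B$ ballot tile whose right-most box lies on the boundary line of the new region. Globally, the collapse removes one column and identifies the type-$D$ region above $\lambda_{M,N}$ with the type-$B$ region above $\lambda'$, corresponding to dropping the final $U$ step of $\lambda_{M,N}$. I would show that $\Phi$ is a bijection by exhibiting the inverse: given a type-$B$ tiling, the two-by-two positions prescribed by the parity rule tell us exactly where to expand each tile back into a type-$D$ tile. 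The cover-inclusive constraint transports faithfully, because the $(0,-4)$-shift clause for tiles containing a two-by-two block is precisely the $(0,-2)$-shift clause applied in the collapsed region; and the parity constraints of Definition \ref{defn:bt}(2) and Definition \ref{defn:DTD}(2) are compatible because removing one $U$ step flips the parity class of the path.

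Finally, to confirm preservation of $\mathrm{art}$, I would check the statistic species by species: ordinary Dyck tiles are unchanged; a Dyck tile of type $D$ and its image have equal $\mathrm{area}$ (since a two-by-two block is counted as one box by convention) and equal $\mathrm{tiles}=1$; a ballot tile of type $D$ and its image match because the bridging box contributes exactly the correction $\tfrac12+\tfrac12=1$ needed to absorb the doubled half-contributions of the two type-$B$ halves into the integer $\mathrm{art}$ of a single type-$B$ ballot tile on the collapsed region. The main obstacle will be the bijection step: carefully verifying that the bridging-box construction around every two-by-two block is both locally forced (so the inverse is unambiguous) and globally compatible with cover-inclusivity, together with the parity matching between Definitions \ref{defn:bt}(2) and \ref{defn:DTD}(2). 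Once these compatibility checks are in place, the $\mathrm{art}$ bookkeeping is routine.
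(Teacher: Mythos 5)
Your proposal takes essentially the same route as the paper: both establish an $\mathrm{art}$-preserving correspondence between type-$D$ tilings above $\lambda_{M,N}$ and type-$B$ ballot tilings above the path with one fewer step, realized by collapsing each two-by-two block to a single anchored (asterisked) box so that the type-$D$ ballot tiles of even length become the required odd-length type-$B$ pairs. The only cosmetic difference is that the paper first observes that above $D^{N}U^{M}$ no nontrivial Dyck tiles occur, so only single boxes, two-by-two blocks, and ballot tiles of type $D$ need to be handled, whereas you also carry along the (here vacuous) case of Dyck tiles of type $D$.
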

\begin{proof}
From Eqn. (\ref{eqn:setpath}), it is easy to see the sign $\epsilon$ of the path 
$\lambda_{M,N}$ is given by $\epsilon\equiv N \pmod 2$.	

We show that Proposition is true for $\epsilon=0$. 
One can show the case of $\epsilon=1$ in a similar way.

Let $\epsilon=0$, or equivalently, $N$ even. 
Since there exists no partial path consisting of $UD$ in $\lambda_{M,N}$, 
we have no Dyck tiles of size larger than or equal to one.
In $\mathcal{D}(\lambda_{M,N}/\ast)$, we have configurations of Dyck tiles
which include a ballot tile of type $D$.
Namely, we have a ballot tile of type $D$ of length $(0,2p)$ with $p\ge1$.
Below, we show that there exists a bijection between a ballot tile of type 
$D$ and ballot tiles of type $B$, which preserves the statistics
$\mathrm{art}$.

Recall that a ballot tile of type $D$ is formed by two ballot tiles
and a single box. 
Here, the length of two ballot tiles is even.
On the other hand, in the case of type $B$, the length of 
ballot tiles of type $B$ is odd.

Suppose that a two-by-two tile forms a ballot tile of type $D$.
Then, we delete the two-by-two tile $t$ and attach a tile $t'$ at the 
position where the left-most one-by-one box of $t$ is placed.
We put asterisks at $t'$ and the box just below $t'$.
This operation yields a ballot tile of type $B$ from a ballot tile of 
type $D$. The discrepancy of the size of ballot tiles is resolved by the 
deletion of the two-by-two tile.

For example, we have a correspondence between ballot tiles of type $D$ and 
type $B$ in Figure \ref{fig:DB}.
\begin{figure}[ht]
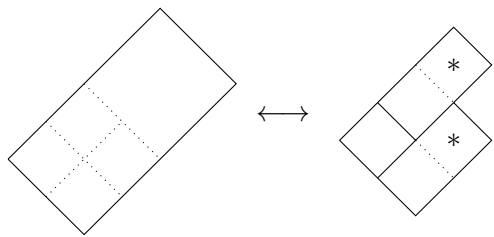

\tikzpic{-0.5}{
\draw(0,0)--(2,2)--(3,1)--(1,-1)--(0,0);
\draw[dotted](0.5,0.5)--(1.5,-0.5)(1,1)--(2,0)(0.5,-0.5)--(1.5,0.5);
}
$\longleftrightarrow$
\tikzpic{-0.5}{
\draw(0,0)--(1.5,1.5)--(2,1)--(1.5,0.5)--(2,0)--(1,-1)--(0,0);
\draw(0.5,0.5)--(1,0)(0.5,-0.5)--(1.5,0.5);
\draw[dotted](1,1)--(1.5,0.5)(1,0)--(1.5,-0.5);
\draw(1.5,1)node{$\ast$}(1.5,0)node{$\ast$};
}
\caption{A correspondence between a ballot tile of type $D$ and type $B$.}
\label{fig:DB}
\end{figure}
Further, it is an easy to show that a ballot tile of type $D$ and 
ballot tiles of type $B$ have the same statistics $\mathrm{art}$.
Actually, these two tiles have the statistics art three.

Other configurations in $\mathcal{D}(\lambda/\ast)$ have 
neither Dyck tiles nor ballot tiles.
Such a configuration consists of single boxes with $\mathrm{area}=1$ and 
$\mathrm{tiles}=1$ and two-by-two boxes with $\mathrm{area}=1$ and 
$\mathrm{tiles}=1$.
\begin{figure}[ht]
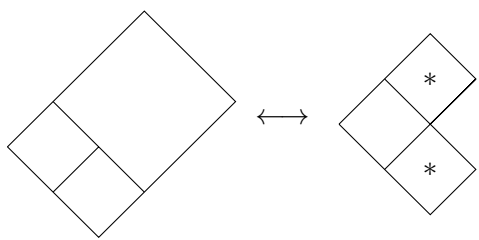

\tikzpic{-0.5}{[scale=0.6]
\draw(0,0)--(3,3)--(5,1)--(2,-2)--(0,0);
\draw(1,1)--(3,-1)(1,-1)--(2,0);
}
$\longleftrightarrow$
\tikzpic{-0.5}{[scale=0.6]
\draw(0,0)--(2,2)--(3,1)--(2,0)--(3,1)--(2,0)--(3,-1)--(2,-2)--(0,0);
\draw(1,1)--(2,0)--(1,-1);
\draw(2,1)node{$\ast$}(2,-1)node{$\ast$};
}
\caption{A correspondence between tilings consisting of single boxes and 
two-by-two boxes}
\label{fig:DB2}
\end{figure}
We delete a two-by-two box $t$ and put a box $a_{\ast}$ with $\ast$ where the left-most 
box of $t$ is placed. 
If there exists a box just below $a_{\ast}$, we also put $\ast$ on the box.
Then, we have a ballot tiling of type $B$.
See Figure \ref{fig:DB2} for an example of this replacement.
Note that the two tilings have the same $\mathrm{art}$ since 
the weight of a two-by-two is one which is the same as a single box.

As a consequence, we have an $\mathrm{art}$ preserving correspondence 
between a Dyck tiling of type $D$ and a ballot tiling of type $B$.
The difference of the sizes of two tilings is one, which 
implies $P(M,N)=Q^{B}(M-1,N)$.
\end{proof}

The correspondence between Dyck tilings of type $D$ and ballot tilings of type $B$
in the proof of Proposition \ref{prop:Box} gives stronger results on 
generating functions of Dyck tilings of type $D$.
Let $\lambda:=\lambda_1\lambda_2\ldots\lambda_{L}\in\{U,D\}^{L}$ be a path of type $D$.
We denote by $\widetilde{\lambda}$ a path $\lambda_{1}\ldots\lambda_{L-1}$.
Note that a non-trivial Dyck tile of type $D$ corresponds to two ballot 
tiles of type $B$ of the same shape and a single box attached to the two 
ballot tiles.
Then, by a similar argument to the proof of Proposition \ref{prop:Box},
we have the following theorem.
\begin{theorem}
\label{thrm:DB}
The generating function $P_{\lambda}$ can be expressed in terms of 
the generating function $P^{B}_{\widetilde{\lambda}}$:
\begin{eqnarray*}
P^{D}_{\lambda}=P^{B}_{\widetilde{\lambda}}.
\end{eqnarray*}
\label{thrm:DTDbT1}
\end{theorem}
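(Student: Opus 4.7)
The plan is to upgrade the statistic-preserving correspondence constructed in the proof of Proposition~\ref{prop:Box} from the single-ridge paths $\lambda_{M,N}=D^N U^M$ to an arbitrary path $\lambda$ of type $D$. The bijection there was entirely local: it operated tile-by-tile, replacing each two-by-two tile and each ballot tile of type $D$ by their type $B$ counterparts equipped with anchor boxes, while leaving ordinary Dyck tiles (those containing no two-by-two box) untouched. So the heart of the argument is to show that the same local moves still assemble into a well-defined bijection $\Phi:\mathcal{D}^{I}_D(\lambda/\ast)\to\mathcal{D}^{I}_B(\widetilde{\lambda}/\ast)$ when the region $R_D(\lambda,\mu)$ is no longer a single triangle.

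First I would define $\Phi$ by three local rewriting rules: (a) a Dyck tile without a two-by-two box is sent to itself, now regarded as a Dyck tile in $R_B(\widetilde{\lambda},\widetilde{\mu})$; (b) a Dyck tile of type $D$ is collapsed to a Dyck tile in type $B$ whose right-most box lands on an anchor column, exactly as in Figure~\ref{fig:DB2}; (c) a ballot tile of type $D$ is split into the two ballot tiles of type $B$ that share anchor boxes in the same column, as in Figure~\ref{fig:DB}. I would then exhibit the inverse of $\Phi$ directly: given a tiling in $\mathcal{D}^{I}_B(\widetilde{\lambda}/\ast)$, pair anchor boxes vertically to rebuild two-by-two tiles and re-glue any two ballot tiles of type $B$ sharing a column of anchor boxes into a ballot tile of type $D$. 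The parity conditions in Definition~\ref{defn:DTD}(2) and Definition~\ref{defn:bt}(2) line up under this inversion because the shrinking operation converts a pair of type $D$ ballot tiles of length $(n,n')$ into a pair of type $B$ ballot tiles of length $(n,n'-1)$, swapping even length with odd length.

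Next I would verify that $\Phi$ preserves $\mathrm{art}$. A single two-by-two tile has $\mathrm{area}=\mathrm{tiles}=1$, and its image in type $B$ (an anchor column together with any adjacent one-box adjustments) contributes the same after the two half-integer contributions from the anchor boxes are summed with the single glue box; the analogous accounting for Dyck tiles of type $D$ and for ballot tiles of type $D$ is identical to the one already carried out in the proof of Proposition~\ref{prop:Box}. Cover-inclusivity is a purely local condition involving downward translations, and it survives $\Phi$ because the $(0,-4)$ translation required for tiles containing a two-by-two box in type $D$ corresponds, after $\Phi$ halves the effective height of such tiles, to the ordinary $(0,-2)$ translation allowed in type $B$.

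The main obstacle I anticipate is the global bookkeeping: verifying that the collapse of the two-by-two lattice of $R_D(\lambda,\mu)$ into the one-box lattice of $R_B(\widetilde{\lambda},\widetilde{\mu})$ assembles coherently along the whole region, and in particular that $\Phi$ induces a bijection between admissible upper paths $\mu$ in type $D$ and admissible upper paths $\widetilde{\mu}$ in type $B$. Once this matching of regions and boundaries is in hand, summing over $\mu$ and applying the $\mathrm{art}$-preserving local correspondence yields $P^{D}_{\lambda}=P^{B}_{\widetilde{\lambda}}$, with Proposition~\ref{prop:Box} providing the anchoring base case.
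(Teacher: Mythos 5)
Your proposal follows essentially the same route as the paper: the paper establishes Theorem \ref{thrm:DB} simply by invoking ``a similar argument to the proof of Proposition \ref{prop:Box}'', i.e.\ by extending the local, $\mathrm{art}$-preserving replacement of two-by-two tiles and type-$D$ Dyck/ballot tiles by anchored type-$B$ configurations from the paths $D^{N}U^{M}$ to an arbitrary path $\lambda$, noting that a non-trivial tile of type $D$ corresponds to two ballot tiles of type $B$ of the same shape together with a single box. Your write-up in fact supplies more detail (the explicit inverse map, the parity bookkeeping between Definition \ref{defn:bt}(2) and Definition \ref{defn:DTD}(2), and the preservation of cover-inclusivity) than the paper's one-sentence justification.
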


Let $\lambda_{1}$ is a Dyck path of length $N$ which is not written as a concatenation 
of two Dyck paths, and $\lambda_{2}$ be a path of type $D$.
Let $M$ be the number of steps in $\lambda_2$.
We denote the concatenation of $\lambda_{1}$ and $\lambda_{2}$
by $\lambda:=\lambda_1\circ \lambda_{2}$.

\begin{cor}
Let $\lambda_1$, $\lambda_2$, and $\lambda:=\lambda_{1}\circ\lambda_{2}$ be paths as above.
The generating function $P_{\lambda}$ has the following factorization:
\begin{align*}
P^{D}_{\lambda}=P^{A}_{\lambda_{1}}\cdot P(M,N)\cdot P^{D}_{\lambda_2}.
\end{align*}
\end{cor}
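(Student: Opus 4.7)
The plan is to reduce the claimed identity to the already-established factorization for ballot tilings of type $B$ via Theorem \ref{thrm:DB}, and then translate back to type $D$ using Proposition \ref{prop:Box}.

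First, I would apply Theorem \ref{thrm:DB} to write $P^{D}_{\lambda}=P^{B}_{\widetilde{\lambda}}$, where $\widetilde{\lambda}$ is obtained from $\lambda$ by deleting its last step. Since $\lambda=\lambda_1\circ\lambda_2$ with $\lambda_1$ a Dyck path and $\lambda_2$ a path of type $D$ that is in particular nonempty (the empty case is trivial), the last step of $\lambda$ comes from $\lambda_2$. Hence $\widetilde{\lambda}=\lambda_1\circ\widetilde{\lambda_2}$, where $\widetilde{\lambda_2}$ has exactly $M-1$ steps and remains a ballot path.

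Next, I would invoke the factorization theorem for ballot tilings of type $B$ recalled in Section \ref{sec:enubt}. The hypothesis that $\lambda_1$ is a Dyck path of length $N$ which cannot be decomposed as a concatenation of two Dyck paths is precisely what is needed to apply that theorem to $\widetilde{\lambda}=\lambda_1\circ\widetilde{\lambda_2}$, yielding
\begin{align*}
P^{B}_{\widetilde{\lambda}}=P^{A}_{\lambda_1}\cdot Q^{B}(M-1,N)\cdot P^{B}_{\widetilde{\lambda_2}}.
\end{align*}
Applying Proposition \ref{prop:Box} to rewrite $Q^{B}(M-1,N)=P(M,N)$, together with Theorem \ref{thrm:DB} applied to $\lambda_2$ to rewrite $P^{B}_{\widetilde{\lambda_2}}=P^{D}_{\lambda_2}$, produces the desired factorization.

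The only genuinely delicate point is to verify that the sign $\epsilon$ of $\lambda$ is consistent with the decomposition, so that $\lambda_2$ itself qualifies as a path of type $D$ and $\widetilde{\lambda_2}$ as a legitimate ballot path. This is a direct parity check from Eqn.~(\ref{eqn:setpath}): since $\lambda_1$ is a Dyck path, it contributes equal numbers of $U$'s and $D$'s to the height, and the sign of $\lambda$ therefore coincides with the sign of $\lambda_2$. Beyond this bookkeeping, the proof requires no new combinatorial content, since Proposition \ref{prop:Box}, Theorem \ref{thrm:DB}, and the type-$B$ factorization do all the work.
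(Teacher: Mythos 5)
Your proposal is correct and follows essentially the same route as the paper, whose proof is simply the one-line observation that Theorem \ref{thrm:DB} combined with the type-$B$ factorization (Theorem 7.4 of \cite{S17}) yields the result. You have merely made explicit the intermediate steps the paper leaves implicit, namely that $\widetilde{\lambda}=\lambda_1\circ\widetilde{\lambda_2}$, that $Q^{B}(M-1,N)=P(M,N)$ by Proposition \ref{prop:Box}, and that $P^{B}_{\widetilde{\lambda_2}}=P^{D}_{\lambda_2}$ by a second application of Theorem \ref{thrm:DB}.
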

\begin{proof}
Theorem \ref{thrm:DB} and the factorization of a generating function
of ballot tilings of type $B$ (see Theorem 7.4 in \cite{S17}) 
implies the factorization of a generating function of Dyck tilings of type $D$.
\end{proof}

\subsection{Link patterns and trees}
Let $w$ be a sequence of $U$ and $D$.
Then, we construct a {\it link pattern} from $w$.
We first perform the operations from $(\heartsuit1)$ to $(\heartsuit3)$ 
as in Section \ref{sec:IM}.
Then, we perform operations successively as follows.
\begin{enumerate}
\item[($\heartsuit4$)] 
Suppose that we have $N_{D}$ unpaired $D$'s and $N_{U}$ unpaired $U$'s 
after the operation $(\heartsuit3)$. Note that $N_{U}$ is at most one. 
Then, we add $N_{D}+N_{U}$ $U$'s at the left end of $w$.

\item[($\heartsuit5$)] We perform the operations from $(\heartsuit1)$ 
to $(\heartsuit3)$ on the new $w$.

\item[($\heartsuit6$)] If the character of the right end of $w$ is $D$, then 
we change the arc formed by this $D$ into a dashed arc.
\end{enumerate}
An {\it outer arc}  is an arc which has no arcs of larger size 
above it.
By definition of dashed arcs, dashed arcs are always outer arcs.

\begin{example}
Let $w:=DDUDUUD$. 
Then, the link pattern associated with $w$ is  
\begin{eqnarray*}
\tikzpic{-0.5}{
\draw(0,0)node[anchor=north]{$D$}(0.5,0)node[anchor=north]{$D$}
(1,0)node[anchor=north]{$U$}(1.5,0)node[anchor=north]{$D$}
(2,0)node[anchor=north]{$U$}(2.5,0)node[anchor=north]{$U$}
(3,0)node[anchor=north]{$D$}
(-0.5,0)node[anchor=north]{$U$}(-1,0)node[anchor=north]{$U$}
(-1.5,0)node[anchor=north]{$U$};
\draw(-0.5,0)..controls(-0.5,0.3)and(0,0.3)..(0,0);
\draw(-1,0)..controls(-1,0.6)and(0.5,0.6)..(0.5,0);
\draw(1,0)..controls(1,0.3)and(1.5,0.3)..(1.5,0);
\draw[dashed](2.5,0)..controls(2.5,0.3)and(3,0.3)..(3,0);
\draw[dashed](-1.5,0)..controls(-1.5,0.9)and(2,0.9)..(2,0);
}
\end{eqnarray*}
The two dashed arcs are outer arcs.
\end{example}

We construct a tree from a path $\lambda$.
Let $\mathcal{Z}$ be the set of Dyck words.

Let $\lambda$ be a sequence of $U$ and $D$ and 
$\pi(\lambda)$ be a link pattern associated with $\lambda$.
We define a plane tree $A(\lambda)$ associated with $\lambda$.
We append $U^{p}$ with some $p\ge1$ to the sequence $\lambda$ if necessary.
A tree $A(\lambda)$ is recursively obtained as follows.
\begin{enumerate}[($\diamondsuit$1)]
\item $A(\emptyset)$ is the empty tree.
\item $A(z\lambda)$, $z\in \mathcal{Z}$, is obtained by attaching the trees
$A(z)$ and $A(\lambda)$ at their roots.
\item $A(UzD)$ is obtained by attaching an edge 
above the root of $A(z)$.
If $U$ and $D$ in $UzD$ form a dashed arc, we put a dot at the edge 
corresponding to this arc.
\item  $A(UzU\lambda)$ is obtained by attaching an edge with a dot above the
root of the tree $A(z\lambda)$.
\end{enumerate}
We encode an additional information on a tree $A(\lambda)$.
Fix a dashed arc $a_{0}$ in $\pi(\lambda)$.
We enumerate outer arcs (without dashes) which is left to $a_0$ and right to the 
rightmost dashed arc left to $a_{0}$ by $a_{1}, a_{2},\ldots, a_{m}$ 
from right to left.
If there are no dashed arcs left to $a_{0}$, we enumerate outer arcs left to 
$a_{0}$ by $a_1,\ldots,a_{m}$ from right to left.
From $(\diamondsuit2)$ to $(\diamondsuit4)$, each arc has a corresponding 
edge in $A(\lambda)$.
We put an arrow from the edge for $a_{i-1}$ to the edge for $a_{i}$ for 
$1\le i\le m$.

\begin{example}
\label{ex:tree1}
Let $\lambda=DUUDUU$. Then, the link pattern and the plane tree are as follows.
\begin{eqnarray*}
\tikzpic{-0.5}{
\draw(0,0)node[anchor=north]{$D$}(0.5,0)node[anchor=north]{$U$}
(1,0)node[anchor=north]{$U$}(1.5,0)node[anchor=north]{$D$}
(2,0)node[anchor=north]{$U$}(2.5,0)node[anchor=north]{$U$}
(-0.5,0)node[anchor=north]{$U$}(-1,0)node[anchor=north]{$U$};
\draw(-0.5,0)..controls(-0.5,0.3)and(0,0.3)..(0,0);
\draw[dashed](-1,0)..controls(-1,0.6)and(0.5,0.6)..(0.5,0);
\draw(1,0)..controls(1,0.3)and(1.5,0.3)..(1.5,0);
\draw[dashed](2,0)..controls(2,0.3)and(2.5,0.3)..(2.5,0);
} \qquad
\scalebox{0.7}{
\tikzpic{-0.5}{
\coordinate 
	child{coordinate (c0)
		child
		child{coordinate (c1)}
		child{coordinate (c2)}
	};
\draw($(0,0)!.5!(c0)$)node{\scalebox{1.5}{$\bullet$}};
\node at ($(c0)!.5!(c2)$){\scalebox{1.5}{$\bullet$}};
\draw[latex-,dashed,very thick]($(c0)!.7!(c1)$)--($(c0)!.7!(c2)$);
}
}
\end{eqnarray*}
\end{example}

\begin{remark}
From Theorem \ref{thrm:DB}, the generating function $P^{D}_{\lambda}$ is equal 
to the generating function $P^{B}_{\widetilde{\lambda}}$.
Recall that we obtain $\widetilde{\lambda}$ by deleting the last step 
in the path $\lambda$.
This operation corresponds to the operation $(\heartsuit6)$.
To see this, we consider the case of type $B$.
Note that a link pattern is dual to a plane tree.
Here, ``dual" means that there is a one-to-one correspondence between 
a link in a link pattern and an edge of a tree.
The construction of a plane tree of type $B$ is studied in \cite{Boe88} and 
that of a link pattern is studied as type BI Kazhdan--Lusztig bases 
in \cite{S141}. 
The construction of a link pattern of type $B$ is the same as type $D$ 
from $(\heartsuit1)$ to $(\heartsuit3)$.
Then, the rightmost $U$ is isolated and marked.
The remaining $U$'s are paired as in $(\heartsuit3)$.
It is easy to see that the edge corresponding to the isolated 
$U$ in a link pattern plays the same role as a dashed arc.
Thus, when we delete the last step from $\lambda$, by taking into account
the operation $(\heartsuit6)$, the tree $A(\lambda)$ of type $D$ is 
the same as the tree for $\widetilde{\lambda}$ of type $B$.
\end{remark}

The generating function can be computed from a tree $A(\lambda)$ by the
following transformation of the tree. 
The algorithm for a transformation is the same as in the case of ballot tilings
considered in \cite[Section 7.3]{S17}.
For this paper to be self-contained, we list up transformations from \cite[Section 7.3]{S17}.
\begin{eqnarray*}
\tikzpic{-0.5}{
\draw(0,0)--(-0.3,-0.3)(-0.7,-0.7)--(-1,-1);
\draw(-0.2,-0.2)node{\rotatebox{-45}{$-$}}(-0.8,-0.8)node{\rotatebox{-45}{$-$}};
\draw[dashed](-0.3,-0.3)--(-0.7,-0.7);
\draw[decoration={brace,mirror,raise=5pt},decorate]
  (0,0) --(-1,-1);
\draw(-0.2,-0.2)node[left=9pt]{$N$};
\draw(0,0)--(0.3,-0.3)(0.7,-0.7)--(1,-1);
\draw[dashed](0.3,-0.3)--(0.7,-0.7);
\draw(0.2,-0.2)node{\rotatebox{45}{$-$}}(0.8,-0.8)node{\rotatebox{45}{$-$}};
\draw[decoration={brace,mirror,raise=5pt},decorate]
  (1,-1)--(0,0);
\draw(0.2,-0.2)node[right=9pt]{$M$};
}
&\mapsto&\genfrac{[}{]}{0pt}{}{M+N}{M}\cdot
\tikzpic{-0.4}{
\draw(0,0)--(0,-0.3)(0,-0.7)--(0,-1);
\draw(0,-0.2)node{$-$}(0,-0.8)node{$-$};
\draw[dashed](0,-0.3)--(0,-0.7);
\draw[decoration={brace,mirror,raise=5pt},decorate]
    (0,-1)-- (0,0);
\draw(0,-0.5)node[right=9pt]{$M+N$};
} \\
\tikzpic{-0.5}{
\draw(0,0)--(-0.3,-0.3)(-0.7,-0.7)--(-1,-1);
\draw(-0.2,-0.2)node{\rotatebox{-45}{$-$}}(-0.8,-0.8)node{\rotatebox{-45}{$-$}};
\draw[dashed](-0.3,-0.3)--(-0.7,-0.7);
\draw[decoration={brace,mirror,raise=5pt},decorate]
  (0,0) --(-1,-1);
\draw(-0.2,-0.2)node[left=9pt]{$N$};
\draw(0,0)--(0.35,-0.35)(0.65,-0.65)--(1,-1);
\draw[dashed](0.35,-0.35)--(0.65,-0.65);
\draw(0.3,-0.3)node{\rotatebox{45}{$-$}}(0.7,-0.7)node{\rotatebox{45}{$-$}};
\draw(0.15,-0.15)node{$\bullet$}(0.85,-0.85)node{$\bullet$};
\draw[decoration={brace,mirror,raise=5pt},decorate]
  (1,-1)--(0,0);
\draw(0.2,-0.2)node[right=9pt]{$M$};
}&\mapsto&
\genfrac{[}{]}{0pt}{}{M+N}{M}_{q^{2}}\prod_{i=1}^{N}(1+q^{i})\cdot
\tikzpic{-0.4}{
\draw(0,0)--(0,-0.35)(0,-0.65)--(0,-1);
\draw(0,-0.3)node{$-$}(0,-0.7)node{$-$};
\draw[dashed](0,-0.4)--(0,-0.6);
\draw(0,-0.15)node{$\bullet$}(0,-0.85)node{$\bullet$};
\draw[decoration={brace,mirror,raise=5pt},decorate]
    (0,-1)-- (0,0);
\draw(0,-0.5)node[right=9pt]{$M+N$};
} \\
\tikzpic{-0.5}{
\draw(0,0)--(-0.45,-0.45)(-0.7,-0.7)--(-1,-1);
\draw(-0.4,-0.4)node{\rotatebox{-45}{$-$}}(-0.8,-0.8)node{\rotatebox{-45}{$-$}};
\draw[dashed](-0.3,-0.3)--(-0.7,-0.7);
\draw[decoration={brace,mirror,raise=5pt},decorate]
  (0,0) --(-1,-1);
\draw(-0.2,-0.2)node[left=9pt]{$N$};
\draw(0,0)--(0.45,-0.45)(0.65,-0.65)--(1,-1);
\draw[dashed](0.45,-0.45)--(0.65,-0.65);
\draw(0.4,-0.4)node{\rotatebox{45}{$-$}}(0.7,-0.7)node{\rotatebox{45}{$-$}};
\draw(0.15,-0.15)node{$\bullet$}(0.85,-0.85)node{$\bullet$};
\draw[decoration={brace,mirror,raise=5pt},decorate]
  (1,-1)--(0,0);
\draw(0.2,-0.2)node[right=9pt]{$M$};
\draw[latex-,dashed](-0.3,-0.3)--(0.3,-0.3);
}&\mapsto&
\genfrac{[}{]}{0pt}{}{M+N}{M}_{q^{2}}
\frac{[2M+N]}{[2(M+N)]}\prod_{i=1}^{N}(1+q^{i}) \cdot
\tikzpic{-0.4}{
\draw(0,0)--(0,-0.35)(0,-0.65)--(0,-1);
\draw(0,-0.3)node{$-$}(0,-0.7)node{$-$};
\draw[dashed](0,-0.4)--(0,-0.6);
\draw(0,-0.15)node{$\bullet$}(0,-0.85)node{$\bullet$};
\draw[decoration={brace,mirror,raise=5pt},decorate]
    (0,-1)-- (0,0);
\draw(0,-0.5)node[right=9pt]{$M+N$};
\draw[latex-,dashed](-0.4,-0.15)node{$($}--(-0.1,-0.15)node{$)$};
}
\end{eqnarray*}
Here $(\leftarrow)$ in the right hand side of the third operation means that 
if the leftmost top edge in the left hand side of the third operation has an outgoing arrow,
we put an outgoing arrow on the top edge of the right hand side.
Note that the transformation is also valid when $M=0$.

Then, we define operations on the following trees (not a partial tree):
\begin{eqnarray*}
\tikzpic{-0.4}{
\draw(0,0)--(0,-0.3)(0,-0.7)--(0,-1);
\draw(0,-0.2)node{$-$}(0,-0.8)node{$-$};
\draw[dashed](0,-0.3)--(0,-0.7);
\draw[decoration={brace,mirror,raise=5pt},decorate]
    (0,-1)-- (0,0);
\draw(0,-0.5)node[right=9pt]{$N$};
}&\mapsto&\prod_{i=1}^{N}(1+q^{i}), \\
\tikzpic{-0.4}{
\draw(0,0)--(0,-0.35)(0,-0.65)--(0,-1);
\draw(0,-0.3)node{$-$}(0,-0.7)node{$-$};
\draw[dashed](0,-0.4)--(0,-0.6);
\draw(0,-0.15)node{$\bullet$}(0,-0.85)node{$\bullet$};
\draw[decoration={brace,mirror,raise=5pt},decorate]
    (0,-1)-- (0,0);
\draw(0,-0.5)node[right=9pt]{$N$};
}&\mapsto& 1.
\end{eqnarray*}
Then, we have a map from a tree $A(\lambda)$ to $\mathbb{Z}[q]$ by 
successive applications of the operations defined above.
We denote this map by $\omega:A(\lambda)\mapsto\mathbb{Z}[q]$.

From Theorem \ref{thrm:DB} and factorization of 
the generating function of type $B$ (Theorem 7.4 in \cite{S17}),
the generating function $P_{\lambda}$ can be expressed in terms 
of a product of $q$-integers.
\begin{theorem}
Let $\lambda$ be a path of type $D$, and $A(\lambda)$ be the 
tree constructed from $\lambda$.
Then, we have 
\begin{eqnarray*}
P^{D}_{\lambda}=\omega(A(\lambda)).
\end{eqnarray*}
\end{theorem}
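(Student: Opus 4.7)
The plan is to reduce the type $D$ statement to the known type $B$ result via Theorem \ref{thrm:DB}, and then verify that the tree $A(\lambda)$ built from $\lambda$ coincides with the type $B$ tree built from $\widetilde{\lambda}$, so that the same map $\omega$ produces the generating function on both sides.

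First I would invoke Theorem \ref{thrm:DB} to rewrite
\begin{equation*}
P^{D}_{\lambda} = P^{B}_{\widetilde{\lambda}},
\end{equation*}
where $\widetilde{\lambda}$ is obtained from $\lambda$ by deleting the last step. This already lets me avoid any direct enumeration on type $D$ tilings.

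Next I would show that $A(\lambda)$ equals the type $B$ plane tree associated to $\widetilde{\lambda}$ in \cite[Section 7.3]{S17}. The key observation is the dictionary already outlined in the remark after Example \ref{ex:tree1}: in type $B$ the rightmost $U$ of $\widetilde{\lambda}$ is isolated and marked, while in type $D$ the operation $(\heartsuit6)$ converts the arc created by the last step of $\lambda$ into a dashed arc. Deleting the last step of $\lambda$ thus transforms the type $D$ link pattern into the type $B$ link pattern for $\widetilde{\lambda}$, and the rules $(\diamondsuit1)$--$(\diamondsuit4)$ for building the plane tree from a link pattern are identical in the two types once this identification is made. I would spell this out by induction on the decomposition of $\lambda$ according to $(\diamondsuit2)$: each Dyck factor contributes the same subtree in both settings, and the outermost dashed arc of $A(\lambda)$ matches the marked edge of the type $B$ tree for $\widetilde{\lambda}$. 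I would also check that the arrow decoration (from the enumeration of outer arcs adjacent to a dashed arc) coincides with the arrow decoration in the type $B$ tree, since both are defined by the same left-to-right enumeration of outer arcs between consecutive dashed/marked edges.

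Then I would apply the type $B$ factorization result. By \cite[Theorem 7.4]{S17} and the explicit evaluation scheme listed after Example \ref{ex:tree1} (which is lifted verbatim from \cite[Section 7.3]{S17}), the map $\omega$ evaluates $P^{B}_{\mu}$ for any ballot path $\mu$. Applying this to $\mu = \widetilde{\lambda}$ gives
\begin{equation*}
P^{B}_{\widetilde{\lambda}} = \omega\bigl(A(\widetilde{\lambda})_{B}\bigr) = \omega(A(\lambda)),
\end{equation*}
where the last equality uses the identification of trees from the previous step. Combining with the first displayed equation yields $P^{D}_{\lambda} = \omega(A(\lambda))$.

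The main obstacle I anticipate is the second step: carefully verifying that the type $D$ link pattern construction $(\heartsuit1)$--$(\heartsuit6)$ applied to $\lambda$, together with the tree rules $(\diamondsuit1)$--$(\diamondsuit4)$, produces exactly the type $B$ tree of $\widetilde{\lambda}$. The operations $(\heartsuit4)$ and $(\heartsuit5)$ (which prepend extra $U$'s and repair the pairing) and the role of $(\heartsuit6)$ in manufacturing a dashed arc from the deleted last step need to be traced through the recursive tree construction, and the arrow decorations (which control the factor $[2M+N]/[2(M+N)]$ in the third transformation rule) must be matched on both sides. Once this structural bijection between decorated trees is in place, the evaluation of $\omega$ is identical in the two settings and the theorem follows.
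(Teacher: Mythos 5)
Your proposal is correct and follows essentially the same route as the paper: the paper also deduces the theorem from Theorem \ref{thrm:DB} together with the type $B$ factorization of \cite[Theorem 7.4]{S17}, and relies on the identification of $A(\lambda)$ with the type $B$ tree of $\widetilde{\lambda}$ via the operation $(\heartsuit6)$, exactly as sketched in the remark after Example \ref{ex:tree1}. Your fleshing-out of the tree identification (including the matching of arrow decorations) is more detailed than what the paper records, but it is the same argument.
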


\begin{example}
We consider the same tree as in Example \ref{ex:tree1}.
The action of $\omega$ on the tree is given as follows.
\begin{eqnarray*}
\scalebox{0.7}{
\tikzpic{-0.5}{
\coordinate 
	child{coordinate (c0)
		child
		child{coordinate (c1)}
		child{coordinate (c2)}
	};
\draw($(0,0)!.5!(c0)$)node{\scalebox{1.5}{$\bullet$}};
\node at ($(c0)!.5!(c2)$){\scalebox{1.5}{$\bullet$}};
\draw[latex-,dashed,very thick]($(c0)!.7!(c1)$)--($(c0)!.7!(c2)$);
}
}
=
\frac{[4]}{[2]}\frac{[3]}{[4]}[2]
\scalebox{0.6}{
\tikzpic{-0.5}{
\coordinate 
	child{coordinate (c0)
		child
		child[missing]
		child{coordinate (c2)
			child[missing]
			child[missing]
			child{coordinate (c3)}		
		}
	};
\draw($(0,0)!.5!(c0)$)node{\scalebox{1.5}{$\bullet$}};
\node at ($(c0)!.5!(c2)$){\scalebox{1.5}{$\bullet$}};
\node at ($(c3)!.5!(c2)$){\scalebox{1.5}{$\bullet$}};
\draw(c2)node{\rotatebox{45}{\scalebox{1.5}{$-$}}};
}
}=[3][6]
\end{eqnarray*}
\end{example}

\subsection{Generating functions with a fixed upper path}
By a similar argument to Proposition \ref{prop:Box} and Theorem \ref{thrm:DB}, 
it is easy to see that the correspondence between a Dyck tiling of type $D$ and 
a ballot tiling preserves also the statistics $\mathrm{tiles}$.
Thus, the generating function $\widetilde{P}_{\mu}$ can be 
expressed in terms of the generating function of type $B$.

Recall that given a path $\mu:=\mu_{1}\ldots\mu_{L}$ of type $D$, 
we define $\tilde{\mu}:=\mu_{1}\ldots\mu_{L-1}$.
Then, we have 
\begin{theorem}
\label{thrm:DB2}
The generating function $\widetilde{P}_{\mu}$ is expressed as
\begin{eqnarray*}
\widetilde{P}^{D}_{\mu}=\widetilde{P}^{B}_{\tilde{\mu}}.
\end{eqnarray*}
\end{theorem}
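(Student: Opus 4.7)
The plan is to run the same bijective argument used in Proposition \ref{prop:Box} and Theorem \ref{thrm:DB}, but to track the statistic $\mathrm{tiles}$ in place of $\mathrm{art}$ and verify that the bijection restricts correctly to the class of cover-exclusive tilings with a fixed upper path. Concretely, I would exhibit an explicit bijection $\Phi\colon \mathcal{D}^{II}_{D}(\lambda/\mu)\to \mathcal{D}^{II}_{B}(\widetilde{\lambda}/\widetilde{\mu})$ that is natural in $\lambda$, so that summing over all lower paths $\lambda$ yields
\begin{align*}
\widetilde{P}^{D}_{\mu}
=\sum_{D\in\mathcal{D}^{II}_{D}(\ast/\mu)}q^{\mathrm{tiles}(D)}
=\sum_{D'\in\mathcal{D}^{II}_{B}(\ast/\widetilde{\mu})}q^{\mathrm{tiles}(D')}
=\widetilde{P}^{B}_{\widetilde{\mu}}.
\end{align*}

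First, I would recycle the local rewriting from the proof of Proposition \ref{prop:Box}: a ballot tile of type $D$ (two ballot tiles of type $B$ glued through a single central box across a two-by-two tile) is mapped to the underlying pair of ballot tiles of type $B$ with anchor boxes marked by $\ast$; a Dyck tile of type $D$ that contains a two-by-two box is opened by deleting that two-by-two and placing an anchor-marked box at the position of its west box, as illustrated in Figures \ref{fig:DB} and \ref{fig:DB2}; plain Dyck tiles and isolated two-by-two tiles correspond to their evident type $B$ counterparts. Since all modifications are confined to the rightmost column (the one meeting the line $x=n$), $\Phi$ depends only on the shape of the tiling near that column and is therefore well-defined for every $\lambda$ simultaneously; in particular, the image lower path is exactly $\widetilde{\lambda}$.

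Second, I would check the two required invariances. For $\mathrm{tiles}$ preservation, by the conventions of Section \ref{sec:DTABD} each piece on the type $D$ side carries $\mathrm{tiles}=1$ and is matched with a piece on the type $B$ side also carrying $\mathrm{tiles}=1$ (recalling that two ballot tiles of type $B$ of the same shape together with the interposing single box are counted as a single tile of mass one); trivial single boxes and two-by-two boxes contribute $1$ on both sides. Thus $\mathrm{tiles}(D)=\mathrm{tiles}(\Phi(D))$. For cover-exclusivity, the defining adjacency conditions of Definition~\ref{defn:DTD} are local constraints on which north, north-west, and north-east neighbours of boxes belong to which tile, and constraints linking two-by-two boxes in $d_2$ to two-by-two boxes in the tile $d_1$ above. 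Away from the anchor column, $\Phi$ is the identity on box positions; within the anchor column, deleting a two-by-two box and flagging the resulting anchor translates the condition on two-by-two boxes precisely into the type $B$ parity condition on ballot tiles of even length appearing in pairs (cf.\ Definition \ref{defn:bt}).

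The main obstacle I anticipate is the cover-exclusivity bookkeeping at the anchor column, where the rewriting changes the vertical extent of tiles from two rows to one row. The point to verify carefully is that any forbidden ``two-by-two above an exposed one-by-one'' configuration on the type $D$ side corresponds exactly to the forbidden adjacency on the type $B$ side once anchor markings are taken into account, and conversely that no new cover-exclusive violations are introduced by $\Phi^{-1}$. This is a bounded case analysis parallel to the one completed for $\mathrm{art}$ in the proof of Theorem \ref{thrm:DB}, so once this local compatibility is checked, the theorem follows by summing the bijection over all choices of lower path $\lambda$.
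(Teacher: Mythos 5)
Your proposal follows essentially the same route as the paper: the paper's own justification is simply that the correspondence between Dyck tilings of type $D$ and ballot tilings of type $B$ constructed for Proposition \ref{prop:Box} and Theorem \ref{thrm:DB} also preserves the statistic $\mathrm{tiles}$, which is exactly the bijection $\Phi$ you describe. Your version is in fact more careful than the paper's (which does not spell out the cover-exclusivity check at the anchor column), but the underlying argument is identical.
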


\bibliographystyle{amsplainhyper} 
\bibliography{biblio}

\end{document}